\theoremstyle{plain}
\newtheorem{thm}{Theorem}[section]
\newtheorem*{thmB}{Theorem B}
\newtheorem*{thmA}{Theorem A}
\newtheorem*{vlem}{The Vertex Lemma}
\newtheorem*{Elem}{The Euler Lemma}
\newtheorem*{total}{Theorem C}
\newtheorem*{thmE}{Theorem E}
\newtheorem*{thmD}{Theorem D}
\newtheorem{cor}[thm]{Corollary}
\newtheorem{lem}[thm]{Lemma}
\newtheorem{lem/defi}[thm]{Lemma/Definition}
\newtheorem{defi/lem}[thm]{Definition/Lemma}
\newtheorem{prop/defi}[thm]{Proposition/Definition}
\theoremstyle{definition}
\newtheorem{defi}[thm]{Definition}
\theoremstyle{remark}
\newtheorem{rem}[thm]{Remark}
\newtheorem*{attention}{Attention}
\newtheorem{example}[thm]{Example}
\newtheorem*{starex}{Example}
\newtheorem{nota}[thm]{Notations}
\newtheorem*{convention}{Convention}
\numberwithin{equation}{section}
\newcommand{\C}{\mathbb{C}}
\newcommand{\GC}{\mathcal{GC}}
\newcommand{\SU}{\mathcal{S}}
\newcommand{\I}{\mathcal{I}}
\newcommand{\PP}{\mathcal{NP}}
\newcommand{\R}{\mathbb{R}}
\newcommand{\F}{\mathbb{F}}
\newcommand{\gr}{\mathcal{G}}
\newcommand{\ga}{\gamma}
\newcommand{\al}{\alpha}
\newcommand{\oa}{\omega}
\newcommand{\Q}{\mathbb{Q}}
\newcommand{\Z}{\mathbb{Z}}
\newcommand{\s}{\smallskip}
\newcommand{\m}{\medskip}
\newcommand{\mcv}{\mathcal{V}}
\newcommand{\mer}{\mathscr M}
\newcommand{\ra}{\rightarrow}
\newcommand{\mt}{\mapsto}
\newcommand{\be}{\begin{equation}}
\newcommand{\ee}{\end{equation}}
\newcommand{\bee}{\begin{equation*}}
\newcommand{\eee}{\end{equation*}}
\newcommand{\beqn}{\begin{eqnarray*}}
\newcommand{\eeqn}{\end{eqnarray*}}
\DeclareMathOperator{\Grad}{Grad}
\begin{document}

\begin{abstract}The level curves of an analytic function germ can have bumps (maxima of Gaussian curvature) at unexpected points near the singularity. This phenomenon is fully explored for $f(z,w)\in \C\{z,w\}$, using the Newton-Puiseux infinitesimals and the notion of gradient canyon. Equally unexpected is the Dirac phenomenon:  as $c\ra 0$, the total Gaussian curvature of $f=c$ accumulates in the minimal gradient canyons, and nowhere else.

Our approach mimics the introduction of polar coordinates in \textit{Analytic Geometry}.
\end{abstract}
% ----------------------------------------------------------------
\title {A'Campo Curvature Bumps and the Dirac Phenomenon\\ Near A Singular Point}

\author{Satoshi Koike, Tzee-Char Kuo and Laurentiu Paunescu}

\address{Department of Mathematics, Hyogo University of Teacher Education, Hyogo,
Japan; School of Mathematics and Statistics, University of Sydney,
  Sydney, NSW, 2006, Australia.}
\email{koike@hyogo-u.ac.jp; tzeechar@gmail.com; laurent@maths.usyd.edu.au}

\date{\today}

\keywords{Singular points of plane curves, Gaussian Curvature, Newton-Puiseux Infinitesimals, Dirac phenomenon}
\subjclass[2010]{Primary
14H55,
%,32SXX, 53AXX
Secondary
32S55}

%%%%%%%%%%%%%%%%%%%%%%%%%%%%%%%%%%%%%%%%%%%%%%%%%%%%%%%%%%%%%%%%%%%%%%%%%%%%%%%%%%%---------------------------------------------------------------

\maketitle

\vspace{0 truecm}

\section {Introduction}\label{intro}

Let us first expose the idea in the real case.  Consider  a real analytic function germ $f(x,y)$, $f(0,0)=f_x(0,0)=f_y(0,0)=0$. The level curves $f=c$, $0<|c|<\epsilon$, have ``bumps" near $0$. This is illustrated in the following two examples of cusps, shown in Fig.\ref{fig:cusp}:
\begin{eqnarray*}\label{1a}& f_2(x,y)=\frac{1}{2}x^2-\frac{1}{3}y^3,\quad  f_4(x,y)=\frac{1}{4}x^4-\frac{1}{5}y^5.&\end{eqnarray*}

Each level curve  $f_2=c$ attains maximum curvature (bump) when crossing the $y$-axis,  along which the curvature tends to infinity as $y$ tends to zero.

\m

A profound discovery, which one of us (Koike) learned in a lecture of A'Campo at Angers in $2000$, is that bumps of such cusps can appear along peculiar arcs, not the $y$-axis.

For example, the curvature of $f_4=c$ is actually $0$ on the $y$-axis; the maximum is attained instead as the level curve crosses $x=\pm ay^{4/3}+\cdots$, $a=(2/7)^{1/6}$.  Following \cite{langevin} we attribute this discovery to Nobert A'Campo, whence the name ``\textit{A'Campo curvature bumps}".

\begin{figure}[htb]
 \centering
 \includegraphics[width=.3\linewidth]{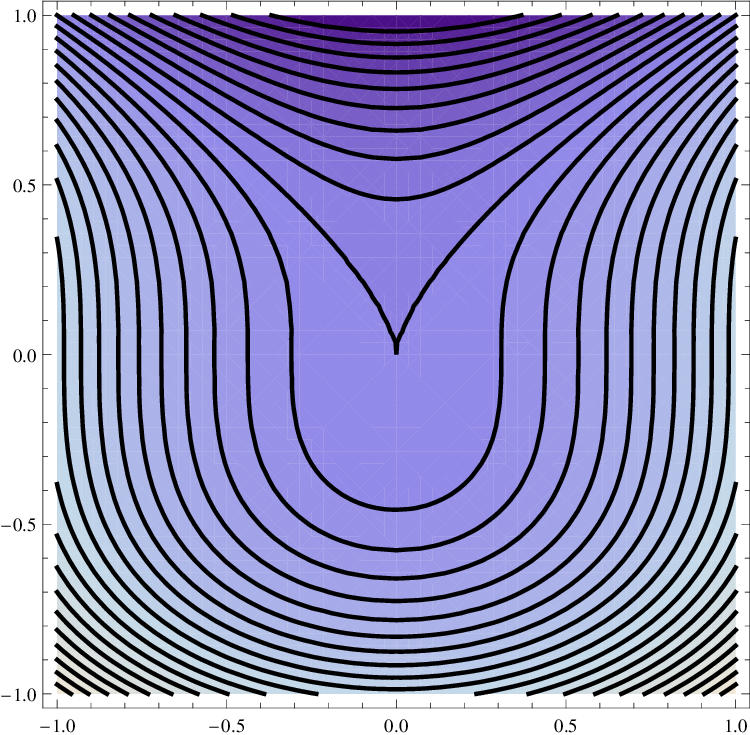}
%\caption{$x^2-y^3$}
 \hfil
 \includegraphics[width=.3\linewidth]{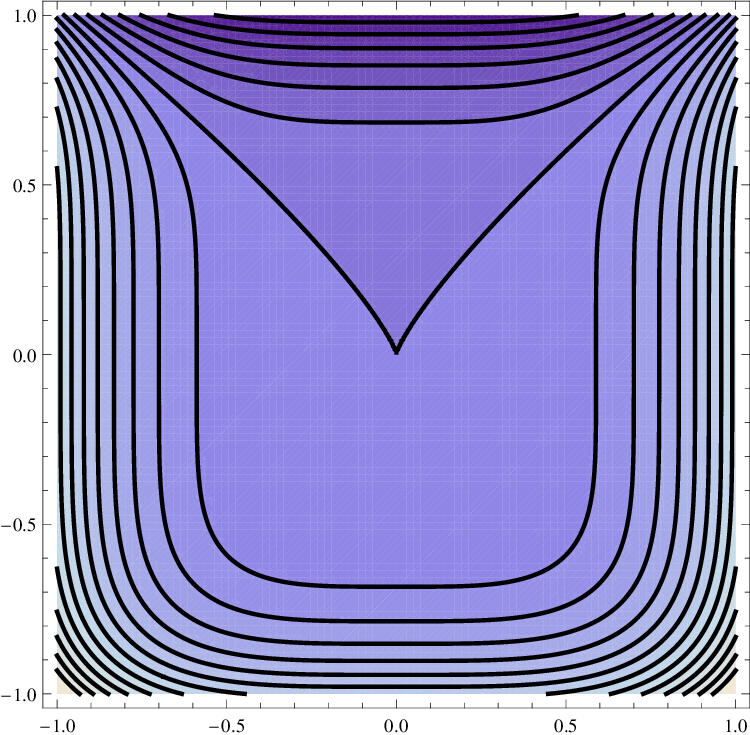}
%\caption{$x^4-y^5$}
 \caption{$f_2 =c\,, \, f_4=c$}
 \label{fig:cusp}
 \end{figure}
 
 \m

In this paper, we explore this phenomenon in the complex case, using the \textit{Newton-Puiseux infinitesimals} (defined in \cite{kuo-pau}, \cite{kuo-pau2}, recalled below) and the notion of ``\textit{gradient canyon}'' (defined in \S\ref{structures}). (The real case, with some subtle differences, is studied in \cite{kkp1}, where, for instance, Proposition 5.7 shows a striking difference.) Our Theorems C and D are, in a way, a continuation of Langevin \cite{langevin} (total curvature), Garcia Barosso and Teissier \cite{barosso} (concentration of curvature). The work of Siersma and Tibar \cite{S-T} is in a different direction.

\m

Take $f(z,w)\in \C\{z,w\}$. A level curve $\SU_c:f(z,w)=c$ is a Riemann Surface in 
$\C^2$, having Gaussian curvature (at regular points)
\be\label{1b}
K(z,w)=\frac{2|\Delta_f(z,w)|^2}{(|f_z|^2 +|f_w|^2)^3},\quad \Delta_f(z,w)\!:=\begin{vmatrix}{f_{zz}}&{f_{zw}}&{f_z}\\
{f_{wz}}&{f_{ww}}&{f_w}\\{f_z}&{f_w}&0\end{vmatrix}.
\ee
 
This is actually the \textit{negative} of the usual Gaussian curvature defined in text books (\cite{docar}).
The formula is due to Ness (\cite{ness}); for convenience we give a proof in 
\S\ref{appendix}\,(I).

\m

Consider a holomorphic map germ with parameter $t$,
\be\label{holo}\al: (\C,0)\longrightarrow (\C^2,0), \quad \al(t)=(z(t),w(t))\not \equiv 0.
\ee

The image \textit{set germ} $\al_*\!:=Im(\al)$ is an \textit{irreducible curve germ} at $0$ in $\C^2$, also called a \textit{holomorphic arc} at $0$. It has a unique (complex) tangent line $T(\al_*)$ at $0$, $T(\al_*)\in\C P^1$.

\m 

We call $\al_*$ a \textit{Newton-Puiseux infinitesimal} at $T(\al_*)$. 

\m

The \textit{Enriched Riemann Sphere} (``enriched" with infinitesimals) is, by definition, 
$$\C P_{enriched}^1\!:=\{\al_*\},\; \text{the set of holomorphic arcs at $0\in \C^2$},$$
furnished with a number of structures defined in \S\ref{structures}.

\m

The image of $t\mt (at,bt)$ is identified with $[a:b]\in \C P^1$, so that $\C P^1\subset \C P_{enriched}^1$.

\m

Let $f(z,w)\in \C\{z,w\}$ be given. Take $\al_*$, and  a parametrization $\al(t)$. 

\m

Suppose $K(\al(t))\not \equiv 0$. Then there exists a \textit{unique} pair $(a,L)$ such that
\be\label{Kk}
\lim_{t\ra 0}\frac{K(\al(t))}{\|\al(t)\|^L}=a,\quad a>0, \; L\in \Q\;\,(\text{rationals}),
\ee
where $(a,L)$ \textit{depends on} $\al_*$, but \textit{not} on the choice of $\al(t)$. This is shown in \S \ref{appendix}\,(II). 

We can write (\ref{Kk}) as
\bee
%\label{arclength}
K(\al(t))\sim a\|\al(t)\|^L,
\eee
where $A(t) \sim B(t)$ means $\lim_{t\ra 0} A(t)/B(t)=1.$ Accordingly, we define the ``\textit{value space}"
\bee 
\mcv\!:=\{a\delta^L\,|\,a\ne 0, \; L\in \Q\}\cup \{O_\mcv\},
\eee 
where $\delta$ is a symbol,
\bee 
a\delta^L\!:=(a,L),\quad O_\mcv\!:=0\delta^\infty\!:=(0,\infty);
\eee
the latter is added so that when $K\equiv 0$ along $\al_*$, the value is $O_\mcv$, the ``\textit{origin}" of $\mcv$.

A \textit{lexicographic ordering} on $\mcv$ is defined: $O_\nu$ is the smallest
element,
\be\label{order} 
a\delta^L>a'\delta^{L'}\;\,\textit{if and only if\, either}\;\, L<L',\;\, \textit{or else}\;\,
L=L',\;a>a'.
\ee 
We also define $a\delta^L\gg a'\delta^{L'}$ (\textit{substantially larger than}) if $ L<L'$. Thus, for example, $$2\delta^{3/2}>\delta^{3/2}\gg 10^{10}\delta^2\gg O_\mcv.$$

Let $f(z,w)$ be given. Using (\ref{Kk}), we define the \textit{curvature function}
\be\label{KL} 
K_*: \C P_{enriched}^1\longrightarrow \mcv,\quad \alpha_*\mt a\delta^L,
\ee
and also the exponent function
\be\label{LEF}L_*:\C P_{enriched}^1\longrightarrow \Q\cup \{\infty\},\quad \al_*\mt L.
\ee

What we do in this paper mimics the shifting from Cartesian coordinates to the polar coordinates in \textit{Analytic Geometry}, where, for example, in order to study 
\bee x^2-y^2=(x^2+y^2)^2, \quad (x,y)\in \R^2,\quad \text{(the ``figure $\infty$" curve,)}
\eee
we ``lift" it to
$r^2=\cos 2\theta$ in the $(r,\theta)$-space.

\m

We proceed similarly, in order to understand the behaviour of $K(z,w)$ as $(z,w)\rightarrow 0$ in $\C^2$, we study the function $K_*$ on $\C P_{enriched}^1$. This turns out to be quite effective and convenient, even though $\C P^1_{enriched}$, being a space of arcs, is infinite dimensional.

\m

In \S\ref{structures}, the main concepts, the A'Campo bump, the gradient canyon, the infinitesimal disc and line, etc., are introduced, and  the main results, Theorem A to Theorem E, are stated. 

\m 

In Theorem A, supplemented by Theorem B, we show how to compute the A'Campo bumps. Examples are given to expose the arcs in $\C^2$ along which the bumps appear. 

\m

We then study the total Gaussian curvature. 
In Theorem C, the integral of the curvature over a gradient canyon, as $c\ra 0$, is given 
by a Gauss-Bonnet type formula.

\m

Theorem D asserts that the curvature on an infinitesimal line \textit{concentrates} in the gradient canyons contained therein, and \textit{nowhere else}; in other words, the integral of the curvature, as $c\rightarrow 0$, behaves like that of a Dirac function. This is a much more precise information than that given in 
\cite{langevin}, \cite{barosso}. 

\m

The integral extends to a measure on $\C_{enriched}$, the $\sigma$-algebra of measurable sets being generated by the enriched discs (Remark\,\ref{RMC}, Remark\,\ref{LRK}).
\m

In Theorem E the polars are perturbed within the canyons to create twin networks of iterated torus knots; the Milnor number $\mu_f$ is expressed as their linking number.

\m  

Throughout this paper we assume $f(0,0)=0 \,\, \text{and }f(z,w)$ is \textit{not} of the form
\be\label{KCONST}f(z,w)=unit \cdot[z-\zeta(w)]^m,
\ee
where $\zeta(w)$ is an integral power series. Indeed, if (\ref{KCONST}) holds,  then $f(z,w)^{1/m}$ is holomorphic and non-singular, having the same Gaussian curvature $K(z,w)$ as $f(z,w)$.

We do not assume $0$ is an isolated singularity of $f(z,w)$.

Theorems A, B, C have been announced in \cite{kkp}.

\m

We are  grateful to the referees, their constructive criticism has led to a much better presentation of this paper. We also thank Don Cartwright for his advice on Carath\'eodory's  Extension Theorem.

\section{The Space $\C P_{enriched}^1$ and the Main Theorems}\label{structures}

The classical Newton-Puiseux Theorem asserts that the field $\F$
of convergent fractional power series in an indeterminate $y$ is
algebraically closed. (\cite{walker}, \cite{wall}.)

Recall that a non-zero element of $\F$ is a (finite or infinite)
convergent series
\bee
%\label{2a}
\alpha(y)=a_0y^{n_0/N}+\cdots +a_iy^{n_i/N}+\cdots,\quad
n_0<n_1<\cdots,
\eee
where $0\ne a_i\in \C$, $n_i \in \Z$ (integers), $N\in \Z^+$ (positive integers), and 
$$GCD(N, n_0, n_1, ...)=1,\quad  \limsup|a_k|^{\frac{1}{n_k}}<\infty.
$$

 We shall also write $\al(y)$ as $\al$.  The \textit{conjugates} of $\alpha$ are
\bee \alpha_{conj}^{(k)}(y)\!:=\sum a_i \theta^{kn_i}y^{n_i/N},\quad
0\leq k\leq N-1,\quad \theta\!:=e^\frac{2\pi \sqrt{-1}}{N}.
\eee
The \textit{order} of $\al$ is 
\begin{eqnarray*}& O(\al)\!:=O_y(\alpha)\!:=\frac{n_0}{N}\;\,\text{if}\;\, \al\ne 0; \quad O(\al)\!:=\infty \;\, \text{if}\;\, \al=0.&
\end{eqnarray*}
 
The \textit{Puiseux multiplicity} of $\al$ is, by definition, 
$$m_{puiseux}(\al)\!:=N.$$

Let $e\in \Q$, or $e=\infty$, be given. The $e$-\textit{jet} of $\al$ is, by definition,
\be\label{jet} 
J^{(e)}(\al)(y)\!:=\begin{cases}\al(y)\;\text{with all terms $y^q$, $q>e$, deleted},\quad & \text{if $e<\infty$},\\  \al(y) &\text{if $e=\infty$.}\end{cases}
\ee

\begin{example}The equation $z^2-y^3=0$ has a pair of conjugate roots $\pm y^{3/2}$,
$$\theta=-1, \quad O_y(\pm y^{3/2})=3/2, \quad m_{puiseux}(\pm y^{3/2})=2.$$

Next, consider the following polynomial equation of degree $4$ over $\F$:
\bee
(z^2-y^3)^2-zy^5=0.
\eee
Using the Newton Polygon method (\cite{walker}), the roots are found to be 
\begin{eqnarray}&\label{beta}y^{\frac{3}{2}}\pm \frac{1}{2}y^{\frac{7}{4}}+\cdots, \; -y^{\frac{3}{2}}\pm \frac{\sqrt{-1}}{2}y^{\frac{7}{4}}+\cdots,&
\end{eqnarray}
where all four are conjugates, $\theta=\sqrt{-1}$. Let $\al$ denote any one of them, then
\begin{eqnarray*}& m_{puiseux}(\al)=4, \;\, O_y(\al)=\frac{3}{2}, \;\, J^{(3/2)}(\al)=\pm y^{\frac{3}{2}}, \;\, m_{puiseux}(J^{(3/2)}(\al))=2.&
\end{eqnarray*}
\end{example}

Now, $\C P_{enriched}^1$ is the union of two \textit{``enriched complex lines"}
$$\C P_{enriched}^1=\C_{enriched}\cup \C_{enriched}'$$
as in \textit{Projective Geometry}, where
\bee\C_{enriched}\!:=\{\beta_*\,|\,T(\beta_*)\ne [1:0]\},\quad \C_{enriched}'\!:=\{\beta_*\,|\,T(\beta_*)\ne [0:1]\}.
\eee

Let us introduce coordinates on $\C_{enriched}$. Consider
\be\label{two}\F_0\!:=\{\al\in \F|O_y(\al)\geq 0\},\;\, \F_1\!:=\{\al|O_y(\al)\geq 1\}, \;\, \F_{1^+}\!:=\{\al|O_y(\al)>1\},
\ee
where $\F_0$ is an integral domain with quotient field $\F$; $\F_1$, $F_{1^+}$ are ideals of $\F_0$.

\m

Take any $\al\in \F_1$ in (\ref{two}). The map germ
\be
\label{para}
\al_{para}: (\C,0)\longrightarrow (\C^2,0), \quad t\mt (\al(t^N),t^N),\quad N\!:=m_{puiseux}(\al),
\ee
is holomorphic. Hence, as in (\ref{holo}), the holomorphic arc $\al_*\in \C_{enriched}$ is defined.

If $N$ is replaced by $kN$, $k\in \Z^+$, $\al_*$, being a \textit{set germ}, would not change.

\m

Of course, all conjugates of $\al$ lead to the same holomorphic arc $\al_*$. For example,
the roots in (\ref{beta}) give rise to the holomorphic arc with parametrisation $(t^6+\frac{1}{2}t^7+\cdots, t^4)$.

\m

The \textit{Newton-Puiseux
coordinate system} on $\C_{enriched}$ is, by definition, the surjection
\be\label{pi}\pi:\F_1\longrightarrow \C_{enriched}, \quad \al \mt \al_*.
\ee

When $\al_*$ is given, the conjugate class of $\al$ is unique. That is, $\pi$ induces a bijection between the set of conjugate classes in $\F_1$ and $\C_{enriched}$.

\begin{defi}\label{enrich} Let $\al_*\in \C_{enriched}$ be given. Take $e$, $1\leq e<\infty$, and $\rho\geq 0$. Let 
\be\label{ID}\mathcal{D}^{(e)}(\al_*;\rho)\!:=\{\beta_*\,|\,\beta(y)=[J^{(e)}(\al)(y)+cy^e]+\cdots ,\,\;|c|\leq \rho\},
\ee
where ``$\cdots$" means ``higher order terms", and
\be\label{IL}\begin{split}\mathcal{L}^{(e)}(\al_*)&\!:=\mathcal{D}^{(e)}(\al_*;\infty)\!:=\cup_{0<\rho<\infty}\mathcal{D}^{(e)}(\al_*;\rho)\\&=
\{\beta_*|\beta(y)=[J^{(e)}(\al)(y)+cy^e]+\cdots,\; |c|<\infty\}.\end{split}
\ee

In particular,
$$\mathcal{L}^{(1)}(\al_*)=\C_{enriched}, \quad \mathcal{D}^{(e)}(\al_*;0)\!:=\{\beta_*\,|\,\beta(y)=J^{(e)}(\al)(y)+\cdots\}.$$

We call $\mathcal{D}^{(e)}(\al_*,\rho)$ the \textit{enriched (complex) disc} along $\al_*$, of \textit{order} $e$, radius $\rho$, and $\mathcal{L}^{(e)}(\al_*)$ the \textit{enriched line} along $\al_*$ of \textit{order} $e$.

\m

\textit{When $e>1$, we also call $\mathcal{D}^{(e)}$ an \textit{infinitesimal} disc, and $\mathcal{L}^{(e)}$ an \textit{infinitesimal} line.}

\m

For convenience, we define 
\bee
\mathcal{D}^{(\infty)}(\al_*)\!:=\mathcal{L}^{(\infty)}(\al_*)\!:=\{\al_*\}, \;\text{a singleton}.
\eee 

Finally, for $1\leq e\leq \infty$, using (\ref{jet}), we define the $e$-\textit{jet map}:
\bee
J^{(e)}: \C_{enriched}\to \C_{enriched}, \quad \al_*\mapsto J^{(e)}(\al_*)\!:=J^{(e)}(\al)_*.
\eee
\end{defi}

\m

When $\al_*$ is given, the right-hand side of (\ref{ID}) \textit{does not depend} on the choices of $\al$ in the conjugate class for $\al_*$. We can use any $\al$ in the conjugate class. Note also that
$$J^{(e)}(\al_*)=J^{(e)}(\tilde{\al}_*)\Longleftrightarrow \mathcal{D}^{(e)}(\al_*; \rho)=\mathcal{D}^{(e)}(\tilde{\al}_*; \rho),
$$and that
\bee O_y(\al-\tilde{\al})\geq e\Longleftrightarrow  \mathcal{L}^{(e)}(\al_*)=\mathcal{L}^{(e)}(\tilde{\al}_*).
\eee

Two enriched lines are either disjoint, or else one contains the other (or are equal),
%$$\mathcal{L}^{(e)}(\al_*) \subseteq \mathcal{L}^{(\tilde{e})}(\tilde{\al}_*)\Longleftrightarrow e\geq\tilde{e}\,  \text{and} \,\,  %O_y(\al-\tilde{\al})\geq \tilde{e}.$$
 \be\label{TWO}\mathcal{L}^{(e)}(\al_*) \subsetneq \mathcal{L}^{(\tilde{e})}(\tilde{\al}_*)\Longleftrightarrow e>\tilde{e}\;\text{and} \;  O_y(\al-\tilde{\al})\geq \tilde{e}.
 \ee

Let $f(z,w)$ be given. For convenience, let us apply a generic unitary transformation, if necessary, to bring $f(z,w)$ to the form
\be\label{mini} f(z,w)\!:=H_m(z,w)+H_{m+1}(z,w)+\cdots,\;\, H_m(1,0)\ne 0,\;\, O(f)=m,
\ee 
where $H_k$ is a homogeneous  $k$-form.  (When $H_m(1,0)\ne 0$, we say $f$ is \textit{mini-regular} in $z$.)

A unitary transformation preserves the metric, hence also the Gaussian curvature.

\m 

From now on we shall restrict our attention to $\C_{enriched}$.

\begin{defi}\label{DAB}Given $\beta_*\in \C_{enriched},$ we say $\mathcal{D}^{(d)}(\beta_*; 0)$ is an \textit{A'Campo bump} of $K_*$, of \textit{degree} $d$, $1\leq d <\infty$, if the following three conditions are satisfied. 
\begin{enumerate}
  \item The curvature function $K_*$ is constant on $\mathcal{D}^{(d)}(\beta_*;0)$. That is, 
  \bee J^{(d)}(\xi_*)=J^{(d)}(\beta_*) \implies  K_*(\xi_*)= K_*(\beta_*);
  \eee
  \item If $\rho>0$ is sufficiently small, then 
  \bee J^{(d)}(\xi_*)\in \mathcal{D}^{(d)}(\beta_*; \rho)\implies K_*(\xi_*)\leq K_*(\beta_*);
  \eee
  where ``$<$" holds for \textit{generic} $\xi_*$, i.e., if 
$a$ is generic, $|a|\leq \rho$, then
  \bee
  \xi(y)=[J^{(d)}(\beta)(y)+ay^d]+\cdots\implies K_*(\xi_*)< K_*(\beta_*);
  \eee
  \item In the case $d>1$, there exists $\epsilon>0$ such that
  \bee J^{(d-\epsilon)}(\xi_*)= J^{(d-\epsilon)}(\beta_*),\;J^{(d)}(\xi_*)\ne J^{(d)}(\beta_*)\implies K_*(\xi_*)\ll K_*(\beta_*).
  \eee
\end{enumerate}
\end{defi}

Now, let $f(z,w)$ be as in (\ref{mini}). 
Consider the Newton-Puiseux factorizations:
\begin{eqnarray}\label{ff}
f(z,y)=unit\cdot \prod _{i=1}^m(z-\zeta_i(y)), \quad f_z(z,y)=unit\cdot \prod_{j=1}^{m-1}(z-\ga_j(y)),
\end{eqnarray} 
where $\zeta_i$, $\ga_j\in \F_1$.
Each $\ga_j$, and $\ga_{j*}$, is called a \textit{polar}. 

\m

If a polar $\ga$ is also a root of $f$, i.e. $f(\ga(y),y)\equiv 0$, then it is a multiple root of $f$. We shall see in Theorem A that such polars do not give rise to A'Campo bumps. 

\m

If there are $k$ distinct $\zeta_i$ in (\ref{ff}), there are exactly $k-1$ polars (counting multiplicities) which are not roots of $f$. This is a consequence of Theorem 2.1 in \cite{kuo-par} (or Lemma 3.3 in \cite{kuo-lu}).
Since $f$ is not of the form (\ref{KCONST}), we have $k\geq 2$. Hence at least one such polar exists.

\m

The following implication follows from the Chain Rule in Calculus: For any $\al\in \F_1$,
\be\label{EMF}
f_z(\al(y),y)\equiv f_w(\al(y),y)\equiv 0 \implies f(\al(y),y)\equiv 0.
\ee

Now take a polar $\ga$, $f(\ga(y),y)\not \equiv 0$. By (\ref{EMF}), $\ga$ is not a common Newton-Puiseux root of $f_z$, $f_w$. Hence, if $q$ is sufficiently large, then
\be\label{GG}
O_y(\|\Grad\!f(\ga(y),y)\|)=O_y(\|\Grad\!f(\ga(y)+uy^q,y)\|), \; \forall\, u\in \C.
\ee

Let $d_{gr}(\ga)$ denote the \textit{smallest number} $q$ such that (\ref{GG}) holds for \textit{generic} $u\in \C$.

\m

In the case $f(\ga(y),y)\equiv 0$, write $d_{gr}(\ga)\!:=\infty$.

\begin{defi}Take a polar $\ga$, $d\!:=d_{gr}(\ga)$. The \textit{gradient canyon} of $\ga_*$ is, by definition,
\bee \GC(\ga_*)\!:=\mathcal{L}^{(d)}(\ga_*), \quad 1\leq d \leq \infty.
\eee

We call $d_{gr}(\ga_*)\!:=d_{gr}(\ga)$ the \textit{gradient degree} of $\ga$, and the \textit{degree} of $\GC(\ga_*)$.

We say $\mathcal{GC}(\ga_*)$ is \textit{minimal} if $d_{gr}(\ga_*)<\infty$ and for every polar $\ga_j$ with $d_{gr}(\ga_j)<\infty$,
\bee \mathcal{GC}(\ga_{j*}) \subseteq\mathcal{GC}(\ga_*) \Longrightarrow \mathcal{GC}(\ga_{j*})= \mathcal{GC}(\ga_*).
\eee
\end{defi}

(The gradient canyons are not topological invariants, see \S\ref{appendix}\,(III).)

\begin{example}\label{ex4}For $f=z^m-w^n$, $2\leq m\leq n$, $\ga=0$ is the only polar,
\begin{eqnarray*}&
\mathcal{GC}(0_*)=\{\al_*\,|\,\al(y)=uy^{d_{gr}(\ga)}+\cdots,\;u\in \C\}, \quad d_{gr}(\ga)=\frac{n-1}{m-1}.\end{eqnarray*}\end{example}

\begin{example}\label{example}Take $f=z^4-2z^2w^2-w^{100}$, having polars $\ga_1=0$, $\ga_2,\ga_3=\pm w$. We find
\bee  d_{gr}(\ga_1)=97,\quad  d_{gr}(\ga_2)=d_{gr}(\ga_3)=1.
\eee
Here $\mathcal{GC}(\ga_{1*})$ is \textit{minimal}, but $\mathcal{GC}(\ga_{2*})=\mathcal{GC}(\ga_{3*})=\C_{enriched}$ is not.

\m

Next consider $g=z^4-2z^2w^2$,  with $\ga_1=0$, $\ga_2$, $\ga_3=\pm w$. Then
\bee
\GC(\ga_{1*})=\{\ga_{1*}\}\subset \GC(\ga_{2*})=\GC(\ga_{3*})=\C_{enriched}. 
\eee
The latter is minimal, containing the former, $d_{gr}(\ga_{1*})=\infty$. 

\m

We shall see  in Theorem B that $\C_{enriched}$ is a minimal canyon only in very exceptional cases, and that no minimal canyon, except $\C_{enriched}$, can contain a singleton canyon.
\end{example} 

Now take a polar $\ga$, $d_{gr}(\ga)<\infty$. Define $L_\ga\in \Q$, and $R_\ga:\C\ra \C$, as follows.

\m 

First, we apply a unitary transformation, if necessary, so that $T(\ga_*)=[0:1]$,  $\ga\in \F_1$.

\m

For brevity let us write $d\!:=d_{gr}(\ga)$. If $d>1$, define $L_\ga$ and $R_\ga(u)$ by the equation
\be\label{Rga}
K(\ga(y)+uy^d,y)=2R_\ga(u)y^{2L_\ga}+\cdots,\quad R_\ga(u)\not \equiv 0.
\ee
Note that for \textit{generic} $u\in \C$,
\be\label{ugg}
L_\ga=\frac{1}{2}L_*((\ga+uy^d)_*), \quad K_*((\ga+uy^d)_*)=(2R_\ga(u), L_*(\ga)),
\ee
where $L_*$ was defined in ({\ref{LEF}}).

In the case $d=1$, we define $L_\ga$ and $R_\ga(u)$ by
\begin{eqnarray}\label{Rgaa}&K(\ga(y)+\frac{uy}{\sqrt{1+|u|^2}},\frac{y}{\sqrt{1+|u|^2}})=2R_\ga(u)y^{2L_\ga}+\cdots,\quad R_\ga(u)\not \equiv 0.&
\end{eqnarray}
Since $\ga(y)$ has no linear term, (\ref{ugg}) remains true.

\m

(We can use (\ref{Rgaa}) also for the case $d>1$; but (\ref{Rga}) is easier for computation.)

\begin{lem}\label{LemA}The function $R_\ga(u)\,(\not\equiv 0)$ is defined and continuous for all $u\in \C$,
\begin{eqnarray}&\label{L} R_\ga(u)\geq 0, \;\, \lim_{u\ra \infty}R_\ga(u)=0; \;\, L_\ga=-d_{gr}(\ga).\end{eqnarray}

In particular, $R_\ga(u)$ is bounded, hence has at least one local maximum.
\end{lem}

The absolute maximum is of course attained. This lemma will be proved in 
\S\ref{proofs}.

\begin{thmA}Take a minimal gradient canyon $\mathcal{GC}(\ga_*)$. Take $c\in \C$ such that $R_\ga(c)$ is a local maximum of $R_\ga(u)$ and let  
\bee
%\label{abump} 
\beta(y)\!:=[J^{(d)}(\ga)(y)+cy^d]+\cdots, \quad d\!:=d_{gr}(\ga_*).
\eee
Then $\mathcal{D}^{(d)}(\beta_*; 0)$ is an A'Campo bump. 
All A'Campo bumps are of this form.
\end{thmA}

Since $f(z,w)$ is not of the form (\ref{KCONST}), $d_{gr}(\ga)<\infty$ for at least one polar $\ga$. There exists at least one minimal gradient canyon, hence at least one A'Campo bump.
 
\begin{starex}\label{2q}For $f_2(z,w)=\frac{1}{2}z^2-\frac{1}{3}w^3$, there is only one polar $\ga=0$, having $d_{gr}(\ga)=2$,
$$R_\ga(u)=(|u|^2+1)^{-3},\;\, L_\ga=-2,\;\, K(\ga+uy^2,y)=2R_\ga(u)y^{-4}+\cdots.$$
Hence $R_\ga(u)$ is maximum at $u=0$. The A'Campo bump is $\mathcal{D}^{(2)}(0_*;0)$.

Next, consider $f_4(z,w)=\frac{1}{4}z^4-\frac{1}{5}w^5$, having only one polar $\ga=0$, 
\begin{eqnarray*}& d\!:=d_{gr}(\ga)=\frac{4}{3}, \;\Delta=z^2w^3(4z^4-3w^5),\;  R_\ga(u)=\frac{9|u|^4}{(|u|^6+1)^3},\; L_\ga=-d.&\end{eqnarray*}
Here $R_\ga(u)$ is maximum on the circle $|u|=(2/7)^{1/6}$, like a volcanic ring; each $u$ on the ring leads to an A'Campo bump. This infinite family of bumps shows up in Fig.\,\ref{fig:cusp} as two real arcs. (However, $R_\ga(0)=0$, $\mathcal{D}^{(e)}(0_*;0)$ is not a bump;  in Fig.\,\ref{fig:cusp}, $K=0$ along the $y$-axis.)
\end{starex}

Now let us factor the initial form $H_m(z,w)$ in (\ref{mini}):
\be\label{HG}H_m(z,w)=c(z-z_1w)^{m_1}\cdots (z-z_rw)^{m_r},\;\, m_i\geq 1, \;\, z_i\ne z_j \;\,\text{if}\;\,i\ne j,
\ee 
and $1\leq r\leq m$, $m=m_1+\cdots +m_r$, $c\ne 0$. 

\m

If $r=m$, all $m_i=1$. If $r<m$ then $H_m(z,w)$ is \textit{degenerate}, and vice versa.

\begin{thmB}\label{ADD1}Every gradient canyon of degree $d$, $1<d<\infty$, is minimal; gradient canyons with $1<d_{gr}\leq \infty$ (singleton canyons included) are mutually disjoint. 
 
There are exactly $r-1$ polars of gradient degree $1$ (counting multiplicities), they all have
 $\C_{enriched}$ as gradient canyon. 
 (If $r=1$, every gradient canyon of degree $<\infty$ is minimal.)

In the case $1<r\leq m$, $\C_{enriched}$ is minimal if and only if $f(z,w)$ has exactly $r$ distinct roots $\zeta_i$ in (\ref{ff}).
(In particular, if $H_m(z,w)$ is non-degenerate, $\C_{enriched}$ is minimal.)
\end{thmB}

Theorems\,A and B will be proved in \S\ref{proofs}. They are used to find all A'Campo bumps.

Now we state Theorem\,C and Theorem\,D, to be proved in \S\ref{CD}.

\begin{defi}\label{disc}
Consider a given enriched disc of finite order and \textit{positive} radius
\bee
\mathcal{D}^{(e)}(\al_*;\rho), \quad 1\leq e< \infty, \; 0<\rho<\infty.
\eee
Take a compact disc in $\C^2$:
\bee D_{isc}(0;\eta)\!:=\{(z,w)\in \C^2\,|\,\sqrt{|z|^2+|w|^2}\leq\eta\},\;\eta>0\;\,\text{sufficiently small}.
\eee

The \textit{horn domain} (in $\C^2$) associated to $\mathcal{D}^{(e)}(\al_*;\rho)$ is the compact subset of $\C^2$:
\be\label{hd} H_{orn}^{(e)}(\al_*;\rho;\eta)\!:
=\{(z,w)\in D_{isc}(0;\eta)\cap \beta_*|\beta(y)=J^{(e)}(\al)(y)+cy^{e}, |c|\leq \rho\}.
\ee

The \textit{total asymptotic Gaussian curvature} over $\mathcal{D}^{(e)}(\al_*;\rho)$ is, by definition,
\be\label{measure} \mer_f(\mathcal{D}^{(e)}(\al_*;\rho))\!:=\lim_{\eta\rightarrow 0}\{\lim_{c\rightarrow 0}\int_{\SU_c\cap H_{orn}^{(e)}(\al_*; \rho;\eta)}KdS\},\ee
where $\SU_c$ is the level surface $f=c$, $S$ the surface area, and $K$ the Gaussian curvature.

The \textit{total asymptotic Gaussian curvature} over an enriched line is, by definition,
\begin{eqnarray}&
\mer_f(\mathcal{L}^{(e)}(\al_*))\!:=\lim_{\rho\ra \infty}\mer_f(\mathcal{D}^{(e)}(\al_*;\rho)).&
\end{eqnarray}

The above definitions are easily extended to the case $e=\infty$, so that
\bee
\mer_f(\mathcal{D}^{(\infty)}(\al_*))=\mer_f(\mathcal{L}^{(\infty)}(\al_*))=\mer_f(\{\al_*\})=0.
\eee
\end{defi}

\textit{Note}.  The order of integration in (\ref{measure}) is vital; if reversed, it is meaningless.

\m

The ``real picture" of a horn domain is shown in Fig.\,\ref{fig:disc}, as a model for $\mathcal{D}^{(e)}(\al_*;\rho)$.
\begin{figure}[htt]
 %\begin{minipage}[b]{.4\linewidth}
   \centering
   \begin{tikzpicture}[scale=0.5]
 % \path (4.3,0.7) node[above] {\tiny{$D_{isc}(0,\eta)$}};
 \path (2.7,2.3) node[above] {\tiny{$x=\rho y^{\frac{3}{2}}$}};
 \path (1.5,0) node[above] {\tiny{$\eta$}};
% \path (1,0) node {\tiny{$\times$}};
% \path (0,0) node {\tiny{$\bullet$}};
% \path (1,0) node[below] {\tiny{$m$}};
 \draw(0,-3) -- (0,0);
  \draw(-3,0) -- (3,0);
 
%  \path (2,0.3) node[right] {\tiny{$\theta_1$}};
%  \path (0.24,2.3) node[right] {\tiny{$\theta_2$}};
 \draw (-0.18,0.9) -- (0.18,0.9);
 %\draw[dotted](-0.20,1) -- (0.20,1);
 \draw (-0.25,1.1) -- (0.25,1.1);
% \draw[dotted](-0.26,1.2) -- (0.26,1.2);
 \draw (-0.27,1.3) -- (0.27,1.3);
 %\draw[dotted](-0.28,1.4) -- (0.28,1.4);
 \draw (-0.35,1.5) -- (0.35,1.5);
 %\draw[dotted](-0.35,1.6) -- (0.35,1.6);
 \draw (-0.44,1.7) -- (0.45,1.7);
 %\draw[dotted](-0.45,1.8) -- (0.45,1.8);
 \draw (-0.6,1.9) -- (0.6,1.9);
 %\draw[dotted](-0.8,2) -- (0.8,2);
 \draw (-0.9,2.1) -- (0.9,2.1);
 %\draw[dotted](-1,2.2) -- (1,2.2);
 \draw (-1.06,2.3) -- (1.06,2.3);
 %\draw[dotted](-1.2,2.4) -- (1.2,2.4);
 \draw (-1.3,2.5) -- (1.3,2.5);
 %\draw[dotted](-1.4,2.6) -- (1.4,2.6);
 \draw (-1.2,2.7) -- (1.2,2.7);
 
  \draw (3,0) arc (0:360:3);
  \draw (0,0) arc (180:120:3);   
   \draw (0,0) arc (0:60:3);
    \end{tikzpicture}
 \caption{Horn Domain $H_{orn}^{(3/2)}(0_*;\rho;\eta)$}\label{fig:disc}
 \end{figure}
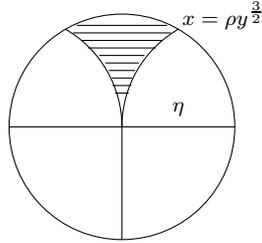
\begin{attention}In \textit{Analytic Geometry}, when returning to $(x,y)$ from $(r,\theta)$, there was no problem. Here, however, we have to be more careful when shifting from $\C_{enriched}$ back to $\C^2$. In (\ref{hd}), $J^{(e)}(\al)$ is a finite series, $\beta_*$ is defined \textit{globally}. If we use infinite series, the germs may not be defined in $D_{isc}(0;\eta)$. Even if  we take $J^{(e)}(\al)$, and all finite series $\beta$ such that 
$\beta_*\in \mathcal{D}^{(e)}(\al_*;\rho)$, then, since the terms of order $>e$ can have arbitrarily large coefficients, the union of all $\beta_*$ would cover the entire disc, except the $z$-axis. The integration (\ref{measure}) would be independent of $e$, hence meaningless. The picture in Fig.\,\ref{fig:disc} would not be horn-shaped.
\end{attention}

Now, a well-known formula to compute the Milnor number $\mu_f$ of $f(z,w)$ is
\begin{eqnarray}\label{MWK}
&\mu_f=\sum_{j=1}^{m-1}[O_y(f(\ga_j(y),y))-1], \quad \ga_j\; \text{as in (\ref{ff})}.&
\end{eqnarray}
In particular, $\mu_f=\infty$ if $0$ is not an isolated singularity.

\m 

We \textit{define} the \textit{Milnor number} of $f$ on a gradient canyon $\mathcal{GC}(\ga_*)$, $d_{gr}(\ga_*)<\infty$, to be
\begin{eqnarray}\label{M}&\mu_f(\mathcal{GC}(\ga_*))\!:=\sum_j[O_y(f(\ga_j(y),y))-1],&
\end{eqnarray}
the sum being taken over all $j$, $1\leq j\leq m-1$, such that 
$\ga_{j*}\in\mathcal{GC}(\ga_*)$.

The \textit{multiplicity} of $\GC(\ga_*)$ is, by definition,
\be\label{multi} m(\GC(\ga_*))\!:=\sharp \{j\,|\,1\leq j \leq m-1,\; \ga_{j*}\in\mathcal{GC}(\ga_*)\}.
\ee

Of course, combining (\ref{M}) and (\ref{multi}), we have
\begin{eqnarray*}&
\mu_f(\GC(\ga_*))+m(\GC(\ga_*))=\sum_jO_y(f(\ga_j(y),y)),&
\end{eqnarray*}
summing over those $j$ as in (\ref{M}).

\begin{example}\label{multi3}Take $f(z,w)=z^4-4zw^4-w^5$. The polars are $y^{4/3}$, $\omega y^{4/3}$ and $\omega^2y^{4/3}$, where $\omega^3=1$.
There is only one gradient canyon, having degree $4/3$. We have
\bee
 \mu_f(\GC(\ga_*))=3\times (5-1)=12, \quad m(\GC(\ga_*))=3.
\eee
\end{example}

\begin{total}{\upshape{(Compare \cite{langevin}, \cite{barosso}.)}} Let $\ga_*$ be a polar, $1< d_{gr}(\ga_*)\leq\infty$. Then
\be\label{ttl}\mer_f(\GC(\ga_*))=\begin{cases}2\pi[\mu_f(\GC(\ga_*))+m(\GC(\ga_*)],&\quad 1<d_{gr}(\ga_*)<\infty,\\0,&\quad d_{gr}(\ga_*)=\infty.\end{cases}
\ee
\end{total}

Thus, in Example\,\ref{multi3}, the total asymptotic Gaussian curvature is 
$$\mer_f(\GC(\ga_*))=2\pi[12+3]=30 \pi.$$

Next, for convenience, let us permute the indices, if necessary, so that 
\be\label{perm} \{\GC(\ga_{1*}),....,\;\GC(\ga_{s*})\}
\ee
is \textit{the set of all} minimal gradient canyons of \textit{degree} $>1$.
By (\ref{TWO}), they are mutually disjoint.

\begin{thmD}\label{ADDIRAC}{\upshape{(Compare  \cite{barosso}.)}}The ``Dirac phenomenon'' appears on every infinitesimal line. 

That is to say, let $\mathcal{L}\!:=\mathcal{L}^{(e)}$, $e>1$, be given, then
\bee
\mer_f(\mathcal{L})\ne 0 \Longleftrightarrow \GC(\ga_{j*})\subseteq \mathcal{L}\;\;\text{for some}\;\, j, \; 1\leq j \leq s;
\eee
and in this case
\begin{eqnarray}\label{DPP}&\mer_f(\mathcal{L})=\sum_{j}\mer_f(\GC(\ga_{j*})),
\end{eqnarray}
summing over all $j$ such that $\GC(\ga_{j*})\subseteq\mathcal{L}$, $1\leq j \leq s$.

For $e=1$, we have
\begin{eqnarray}\label{entire} &\mer_f(\C_{enriched})=2\pi m(r-1)+\sum_{j=1}^s\mer_f(\GC(\ga_{j*})),\;\, \text{$r$ as in (\ref{HG})}.&
\end{eqnarray}
In particular, if $f(z,w)$ has exactly $r$ distinct roots $\zeta_i$ in (\ref{ff}), 
%if $\C_{enriched}$ is minimal,
then
\bee
\mer_f(\C_{enriched})=2\pi m(r-1).
\eee 
\end{thmD}

\begin{rem} \label{RMC}Using Carath\'{e}odory's Extension Theorem, we can extend $\mer_f$ to a \textit{measure} on the $\sigma$-algebra generated by the discs (see Remark \ref{LRK}). In particular,
\begin{eqnarray*}&\mer_f(\mathcal{D}^{(e)}(\al_*;0))\!:=\lim_{\rho\ra 0}\mer_f(\mathcal{D}^{(e)}(\al_*;\rho)) ,
\end{eqnarray*}
which need not be $0$, see (\ref{IPA}). Then (\ref{DPP}) asserts that the measure of $\mathcal{L}^{(e)}$ is \textit{concentrated} in the gradient canyons contained therein, whence referred to as the ``\textit{Dirac phenomenon}''.
\end{rem}

\begin{cor}{\upshape{(\cite{langevin}, \cite{barosso}.)}} If\, $0$ is an isolated singularity, then
\bee
\mer_f(\C_{enriched})=2\pi [\mu_f+O(f)-1].
\eee
\end{cor}

\m

\textit{We now state Theorem E}, to be proved in \S\ref{them}. Let us decompose $f_z$ in $\C\{z,w\}$:
 \bee f_z(z,w)=unit\cdot p_1(z,w)^{e_1}\cdots p_\tau(z,w)^{e_\tau},\quad e_k\geq 1,\eee where $ p_k(z,w)\in \C\{z,w\}$ are the (distinct) irreducible factors, mini-regular in $z$.

 For convenience, let us permute the indices of $\{\ga_j\}$, if necessary, so that
 $$p_k(\ga_k(w),w)=0,\quad 1\leq k\leq \tau.$$
 That is, $\ga_k$, together with the conjugates, are \textit{the} Newton-Puiseux roots of $p_k(z,w)$.

 \s

 Take a set of \textit{generic} numbers $\epsilon_{k,i}$:
 $$\epsilon\!:=\{\epsilon_{k,i}\,|\, 1\leq i \leq e_k,\;1\leq k\leq \tau,\;\,|\epsilon_{k,i}|\;\,\text{sufficiently small}\}.$$

 Take a fixed $k$. Let $\hat{\ga}_k(y)$ denote $\ga_k(y)$ with all terms $y^e$, $e>d_{gr}(\ga_k)$, deleted.
 We then perturb $\hat{\ga}_k(y)$ to
 \bee \ga^{(\epsilon)}_{k,i}(y)\!:=\hat{\ga}_k(y)+\epsilon_{k,i}y^{d_k}\in \gr(\ga_k),\quad d_k\!:=d_{gr}(\ga_k), \quad 1\leq i\leq e_k.\eee

 For each $i$, $\ga^{(\epsilon)}_{k,i}$ generates an \textit{irreducible} function germ
 $$p^{(\epsilon)}_{k,i}(z,w)\in \C\{z,w\}, \quad p^{(\epsilon)}_{k,i}(\ga^{(\epsilon)}_{k,i}(y),y)=0.$$ 
 We then define
 \begin{eqnarray*}&P^{(\epsilon)}_k(z,w)\!:=\prod_{i=1}^{e_k}p^{(\epsilon)}_{k,i}(z,w), \quad f^{(\epsilon)}_z(z,w)\!:=\prod_{k=1}^\tau P^{(\epsilon)}_k(z,w).\end{eqnarray*}

 \textit{The Newton-Puiseux roots of $f^{(\epsilon)}_z(z,w)$ are mutually distinct.} 

 \m

 Now let  $S^3\subset \C^2$ denote a $3$-sphere centered at $0$ with sufficiently small radius,
 $$V_{k,i}^{(\epsilon)}\!:=\{(z,w)\,|\,p^{(\epsilon)}_{k,i}(z,w)=0\}, \quad K_{k,i}^{(\epsilon)}\!:=V_{k,i}^{(\epsilon)}\cap S^3;$$ each $K_{k,i}^{(\epsilon)}$ is an iterated torus knot (\cite{EN}, \cite{milnor}, \cite{reeve}). We call
 $$\mathscr{C}_k^{(\epsilon)}\!:=\{K_{k,i}^{(\epsilon)}|1\leq i\leq e_k\},\quad \mathscr{N}^{(\epsilon)}\!:=\{\mathscr{C}_k^{(\epsilon)}|1\leq k\leq \tau\},$$ a \textit{cable} of knots, and a \textit{network} of cables, respectively. 
 
\m

 Next, take \textit{another} set of generic numbers $\delta\!:=\{\delta_{k,i}\}$. Define $p_{k,i}^{(\delta)}$, $f_z^{(\delta)}$, $K_{k,i}^{(\delta)}$, $\mathscr{C}_k^{(\delta)}$, $\mathscr{N}^{(\delta)}$. 
 
 \m

 Let $\mathscr{L}(K_{k,i}^{(\epsilon)}, K_{l,j}^{(\delta)})$ denote the linking number of two knots in the usual sense. 
 We call \begin{eqnarray*}&\mathscr{L}(\mathscr{C}_k^{(\epsilon)},\mathscr{C}_l^{(\delta)})\!:=\sum_{i=1}^{e_k}\sum_{j=1}^{e_l}\mathscr{L}(K_{k,i}^{(\epsilon)}, K_{l,j}^{(\delta)}),&
 \end{eqnarray*}
 the \textit{linking number} of the cables.
 
 We call $\mathscr{N}^{(\epsilon)}$, $\mathscr{N}^{(\delta)}$ a \textit{twin} networks. Their \textit{linking number} is, by definition,
 \begin{eqnarray*}&
 \mathscr{L}(\mathscr{N}^{(\epsilon)},\mathscr{N}^{(\delta)})\!:=\sum_{k=1}^\tau\sum_{l=1}^\tau \mathscr{L}(\mathscr{C}_k^{(\epsilon)},\mathscr{C}_l^{(\delta)}).
 \end{eqnarray*}

\begin{thmE}Suppose $0$ is an isolated singularity of $f(z,w)$. Then
 \be \label{milnor}\mu_f=\mathscr{L}(\mathscr{N}^{(\epsilon)},\mathscr{N}^{(\delta)}).
 \ee
 \end{thmE}
 
  This formula provides a geometric interpretation of $\mu_f$ using gradient canyons.

\section{Newton Polygons Relative to An Arc}\label{NP}

Two lemmas are presented in this section. The first is used to compute $d_{gr}(\ga_*)$. The second is pivotal for proving of Lemma \ref{32}; the latter is pivotal for proving Theorem A.

\m

Let $\ga$ be a given polar in (\ref{ff}), $d_{gr}(\ga)<\infty$, i.e., $f_z(\ga(y),y)\equiv 0$, $f(\ga(y),y)\not \equiv 0$.

\m

We can apply a unitary transformation, if necessary, so that $\ga\in \F_{1^+}$, $T(\ga_*)=[0:1]$.

\m

We then change variables (formally):
\begin{equation}\label{3b}Z\!:=z-\ga(w),\quad W\!:=w,\quad
F(Z,W)\!:=f(Z+\ga(W),W).\end{equation}

Since $\ga\in \F_{1^+}$, i.e., $O(\ga)>1$, it is easy to see that
\be\label{D}\|\Grad_{z,w}\!f\|\sim\|\Grad_{Z,W}\!F\|,\quad
\Delta_f(z,w)=\Delta_F(Z,W)+\ga''(W)F_Z^3,\ee 
 and accordingly,
\be\label{KNF}
K(z,w)\sim\frac{2|\Delta_F+\ga''F_Z^3|^2}{(|F_Z|^2+|F_W|^2)^3},\quad O(\ga'')>-1.
\ee
(As before, $A\sim B$ means $A/B\rightarrow 1$ as $(z,w)\ra 0$.)

\m

The Newton polygon $\PP(F)$ is defined in the usual way, as follows. Let us write
$$F(Z,W)=\sum c_{iq}Z^iW^q,\quad c_{iq}\ne 0,\quad (i,q)\in \Z\times\Q.$$
A monomial term with $c_{iq}\ne 0$ is represented by a ``\textit{Newton dot}" at $(i,q)$. We shall simply call it a \textit{dot} of $F$. The \textit{boundary} of the convex hull generated by $\{(i+u,q+v)|u,\,v\geq 0\}$, for all dots $(i,q)$, is the Newton Polygon $\PP(F)$, having edges $E_i$ and angles $\theta_i$, as shown in Fig.\ref{fig:npF}. In particular, $E_0$ is the half-line $[m,\infty)$ on the $Z$-axis. (In \cite{kuo-lu}, \cite{kuo-par}, this is called the Newton Polygon of $f$ \textit{relative to} $\ga$, denoted by $\PP(f,\ga)$.)

For a line in $\R^2$ joining $(u,0)$ and $(0,v)$, let us call $v/u$ its \textit{co-slope}. Thus
$$\textit{co-slope of}\;\;E_s=\tan\theta_s.$$

Some elementary, but useful, facts are:
\begin{itemize}
\item If $i\geq 1$, then $(i,q)$ is a dot of $F$ \textit{if and only if}
$(i-1,q)$ is one of $F_Z$.
\item $F_Z(0,W)\equiv 0$; $F_Z$ has no dot of the form $(0,q)$; $F$ has no dot of the form $(1,q)$.
\item Since $f(\ga(w),w)\not\equiv 0$, we know $F(0,W)\not\equiv 0$. Let us write
\begin{equation} \label{3k}F(0,W)=aW^h+\cdots, \quad a\ne 0, \;\,
h\!:=O_W(F(0,W))\in \Q.\end{equation}
Then $(0,h)$ is a vertex of $\PP(F)$, $(0,h-1)$ is one of $\PP(F_W)$. (See Fig.\,\ref{fig:npZ}.)
\end{itemize}

\begin{figure}[htt]
 \begin{minipage}[b]{.4\linewidth}
   \centering
   \begin{tikzpicture}[scale=0.44]
   
   \draw(0,0) -- (0,11.5);
   \path (0,11.5) node[above] {\tiny{W}};
  
   \path (7.5,0) node[below] {\tiny{$m$}};   
   \draw(0,0) -- (8.1,0);
    \path (8,0) node[right] {\tiny{Z}};
     
  \draw[dotted] (0.5,8)--(2,5);
    
   \draw plot[mark=*] coordinates {(2,5) (2.7,4)};
   \draw plot[mark=*] coordinates {(0.5,8) (0,11)};
    
   \draw plot[mark=*] coordinates {(5.6,1.4) (7.6,0)};
    
    \draw[dotted] (3.6,3) -- (4.8,2);

 \draw[dotted] (1.5,4) -- (2.8,4);
  \path (2,3.7) node[above] {\tiny{$\theta_s$}};
 
   \path (6.3,-0.3) node[above] {\tiny{$\theta_1$}};
    
    \path (7.1,0.35) node[above] {\tiny{$E_1$}};
    
    \path (3.7,4.8) node[left] {\tiny{$E_s$}};
    
     \path (0.6,8) node[right] {\tiny{$(m_{top},q_{top})$}};
    
    \path (2.2,9.2) node[left] {\tiny{$E_{top}$}};
    
    \draw (6.8,0) arc (180:150:1);
    
    \end{tikzpicture}
   \caption{$\PP(F)$}\label{fig:npF}
    \end{minipage}
    \begin{minipage}[b]{.4\linewidth}
   \centering
 \begin{tikzpicture}[scale=0.4]
  
   \draw(0,0) -- (0,13);
   \path (0,13) node[above] {\tiny{$W$}};
    
    \draw[dotted](7,0.5) -- (10.5,0.5);
    
    \path (8.8,0.45) node[above] {$\theta_{top}$};
    
    \draw(0,0) -- (12,0);
    \path (12,0) node[below] {\tiny{$Z$}};
     
    \draw[dotted] (5,0) -- (5,11);
    
    \draw plot[mark=*] coordinates {(0,12) (9,2) (10.5,0.5)};
    
    \draw plot[mark=*] coordinates {(9,2) (7,5) (5.5,8)};
    
    \draw[dotted] (5,11) -- (9,2);
    
    \draw[dashed] (5.5,8) -- (5,11);
    
    \draw[dashed] (0.5,8) -- (0,11);
    
    \path (5.1,9.5) node[right] {\tiny{$L$}};
    
    \path (0.1,9) node[right] {\tiny{$L^*$}};
    
    \path (4.4,11) node[right] {$\circ$};
    
    \path (0,11) node {$\circ$};
    
    \path (0,12) node[right] {\tiny{$(0,h)$}};
    
    \path (0,10.9) node[left] {\tiny{$(0,h-1)$}};
    
    \path (9,2) node[right] {\tiny{${(\widehat{m}_{top},\widehat{q}_{top})}$}};
    
    \path (10.5,0.5) node[right] {\tiny{$(m_{top},q_{top})$}};
    
    \path (5.5,8) node[right] {\tiny{$(m^*+1,q^*)$}};
    
    \path (5.3,11) node[right] {\tiny{$(1,h-1)$}};
    
    \path (5,0) node[below] {$1$};
    
    \path (2.3,7) node[right] {\tiny{$E_{top}$}};
    
    \draw[dotted] (9.5,0.5) arc (180:145:1.3); 
     
    \end{tikzpicture}   
     
    \caption{$\PP(F)$ $vs$ $\PP(F_Z)$.} \label{fig:npZ}
    \end{minipage}
    \end{figure}
    
\begin{nota}\label{top}Let $E_{top}$ denote the edge whose left vertex is $(0,h)$, $h$ as in (\ref{3k}), and right vertex is $(m_{top}, q_{top})$, as shown in the figures. We call it the \textit{top} edge; the angle is $\theta_{top}$. 
%On the other hand,  the vertical edge, denoted by $E_v$, is the vertical half-line sitting on $(0,h)$, $\theta_v=\pi/2.$
\end{nota}
 
Let $(\widehat{m}_{top},\widehat{q}_{top})\ne (0,h)$ be the dot of $F$ on $E_{top}$ which is
\textit{closest} to $(0,h)$. (Of course, $(\widehat{m}_{top},\widehat{q}_{top})$ may coincide with $(m_{top}, q_{top})$.) Then, clearly,
\bee
\label{3d}2\leq \widehat{m}_{top}\leq m_{top},\quad \frac{h-\widehat{q}_{top}}{\widehat{m}_{top}}=\frac{h-q_{top}}{m_{top}}=\tan\theta_{top}.\eee     

Now we draw a line $L$ through $(1,h-1)$ with the following two properties:
\begin{itemize}
	\item If $(m',q')$ is a dot of $F_Z$, then $(m'+1,q')$ lies on or above $L$;
	\item There exists a dot $(m^*,q^*)$ of $F_Z$ such that $(m^*+1,q^*)\in L$. (Of course, $(m^*+1,q^*)$ may coincide with $(\widehat{m}_{top},\widehat{q}_{top})$.)
\end{itemize}

\begin{lem}\label{LH}Let $\sigma^*$ denote the co-slope of $L$. Then
\be\label{tan} (i)\;d_{gr}(\ga)=\sigma^*;\,(ii)\,\;\sigma^*\geq \tan\theta_{top};\;(iii)\;  \sigma^*=\tan\theta_{top}\Leftrightarrow \tan\theta_{top}=1.
\ee

All dots of $F_W$ lie on or above $L^*$, $L^*$ being the line through $(0,h-1)$ parallel to $L$.

 In the case $\tan\theta_{top}>1$, $(0,h-1)$ is the only dot of $F_W$ on $L^*$.
\end{lem}

In Example\,(\ref{ex4}), $L$ is the line joining $(1,n-1)$ and $(m,0)$, hence $d=\frac{n-1}{m-1}$.

\begin{starex}\label{ExampleGC}For $F(Z,W)=Z^4+Z^3W^{27}+Z^2W^{63}-W^{100}$ and
$\ga=0$, $\mathcal{NP}(F)$ has only two vertices $(4,0)$,
$(0, 100)$, while $\mathcal{NP}(F_Z)$ has three: $(3,0)$,
$(2,27)$, $(1,63)$. The latter two and $(0,99)$ are collinear, spanning $L^*$; $h=100$, $\sigma^*=(99-27)/2=36$.
\end{starex}

\begin{nota}\label{no}Take $e\geq 1$. Let $\oa(e)$ denote the weight system: $\oa(Z)=e$, $\oa(W)=1$.

Let $G(Z,W^{1/N})\in \C\{Z,W^{1/N}\}$ be given. Consider its weighted Taylor expansion relative to this weight. We shall denote the \textit{weighted initial form} by $\I_{\oa(e)}(G)(Z,W)$, or simply $\I_\oa(G)$ when there is no confusion.

If $\I_\oa(G)=\sum a_{ij}Z^iW^{j/N}$, the \textit{weighted order} of $G$ is $O_\oa(G)\!:=ie+\frac{j}{N}$.
\end{nota}

\begin{proof}Note that ($ii$) and ($iii$) are clearly true, since $(1,h-1)$ lies on or above $E_{top}$.

Next, if $(i,q)$ is a dot of $F_W$, then $(i,q+1)$ is one of $F$, lying on or above $E_{top}$. Hence, by ($ii$), all dots of $F_W$ lie on or above $L^*$.

It also follows that if $\tan\theta_{top}>1$, then $(0,h-1)$ is the only dot of $F_W$ on $L^*$. 

\m

Now we show $(i)$. 

It is easy to see that if $\tan\theta_{top}=1$, then $d_{gr}(\ga)=1$.

It remains to consider the case $\sigma^*>1$. Since $\ga$ is a polar, 
\be\label{LI}O_y(\|\Grad\!f(\ga(y),y)\|)=O_W(F_W(0,W))=h-1.
\ee

Let us first take weight $\oa\!:=\oa(e)$ where $e\geq \sigma^*$. In this case, since $\sigma^*>1$,
\bee\I_{\oa}(F_W)(Z,W)=ahW^{h-1},\quad O_\oa(F_Z)\geq h-1;\eee where $a$, $h$ are as in (\ref{3k}), $ah\ne 0$. Hence for all $u\in \C$,
$$ O_W(F_W(uW^e,W))=h-1,\quad O_W(F_Z(uW^e, W))\geq h-1.$$

It follows that $d_{gr}(\ga)\leq \sigma^*$. It remains to show that $\sigma^*>d_{gr}(\ga)$ is impossible.

\m

Let us take $\oa(e)$ with $e<\sigma^*$. Note that $(m^*,q^*)$ is a dot of $F_Z$ on $L^*$, where $(m^*+1,q^*)$ is shown in Fig.\ref{fig:npZ}.
Hence, for generic $u$,
$$O_W(F_Z(uW^e, W))<h-1,\quad O_W(\|\Grad\!F(uW^e,W)\|)<h-1.$$

Thus, by (\ref{LI}), we must have $d_{gr}(\ga)>e$. This completes the proof.
\end{proof}

For the next lemma, let us expand $\Delta_F$ in (\ref{D}):
\be\label{Del}\Delta_F=-F_{ZZ}F_W^2-F_Z^2F_{WW}+2F_ZF_WF_{ZW}.
\ee

\begin{lem}\label{L1}Let $\ga$ be a given polar, $1<d\!:=d_{gr}(\ga)<\infty$, $\oa\!:=\oa(e)$ a weight system.

If  $e\geq d$, then the following is true:
\be\label{3ee}
O_\oa(F_{ZZ}F_W^2)<\min\{O_\oa(F_Z^2F_{WW}), O_\oa(F_ZF_WF_{ZW}),O_\oa(\ga''F_Z^3)\}.\ee It follows that
\be\label{3e} \I_{\oa}(F_{ZZ}F_W^2)=\I_{\oa} (\Delta_F)=\I_{\oa}(\Delta_F+\ga''F_Z^3).
\ee

If we merely assume $e\geq \tan\theta_1$, then a weaker statement is true:
\be\label{3i} 
O_\oa(F_{ZZ}F_W^2)=O_\oa (\Delta_F), \;\, O_\oa(\Delta_F+\ga''F_Z^3)=\min\{O_\oa(\Delta_F), O_\oa(\ga''F_Z^3)\}.
\ee
\end{lem}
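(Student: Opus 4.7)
The plan is to establish (\ref{3ee}), (\ref{3e}), and the weaker (\ref{3i}) by direct estimates of $\oa$-weighted orders, with the Newton polygon data of Lemma \ref{LH} serving as the only geometric input.

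First, under the strong hypothesis $e\geq\sigma^*=d_{gr}(\ga)>1$, Lemma \ref{LH} tells us $(0,h-1)$ is the unique dot of $F_W$ on $\mathcal{L}^*$ (when $\sigma^*>1$), so $\I_\oa(F_W)=ahW^{h-1}$ and $O_\oa(F_W)=h-1$; one more $\pt_W$ gives $\I_\oa(F_{WW})=ah(h-1)W^{h-2}$ and $O_\oa(F_{WW})=h-2$. Dots $(m^*,q^*)$ of $F_Z$ satisfy $m^*\geq 1$ and $q^*+\sigma^* m^*\geq h-1$, so $O_\oa(F_Z)\geq(h-1)+(e-\sigma^*)\geq h-1$; then $O_\oa(F_{ZZ})=O_\oa(F_Z)-e$ and $O_\oa(F_{ZW})\geq O_\oa(F_Z)-1$ follow by differentiation. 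Since $\ga\in\D_{1^+}$, $O_W(\ga'')=O_y(\ga)-2>-1$.

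Setting $A:=O_\oa(F_Z)$ and comparing with $O_\oa(F_{ZZ}F_W^2)=A-e+2(h-1)$:
\begin{align*}
O_\oa(F_Z^2F_{WW})-O_\oa(F_{ZZ}F_W^2) &= A+e-h,\\
O_\oa(F_ZF_WF_{ZW})-O_\oa(F_{ZZ}F_W^2) &\geq A+e-h,\\
O_\oa(\ga''F_Z^3)-O_\oa(F_{ZZ}F_W^2) &> 2A+e-2h+1.
\end{align*}
Each difference is $\geq e-1$ (using $A\geq h-1$), and $e-1>0$ since $e\geq\sigma^*>1$. This proves (\ref{3ee}), and (\ref{3e}) is then immediate: the strictly $\oa$-dominant summand $-F_{ZZ}F_W^2$ determines $\I_\oa(\Delta_F)=\I_\oa(\Delta_F+\ga''F_Z^3)$.

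For (\ref{3i}) under the weaker hypothesis $e\geq\tan\theta_1$, these three pairwise differences remain nonnegative but may vanish, so one must also rule out cancellation of $\oa$-initial forms. The plan is a $Z$-degree bookkeeping: if $m_0$ denotes the smallest $Z$-degree of a leading monomial of $\I_\oa(F_Z)$, then the leading $Z$-degrees of $\I_\oa(F_{ZZ}F_W^2)$, $\I_\oa(F_Z^2F_{WW})$, $\I_\oa(F_ZF_WF_{ZW})$, and $\I_\oa(\ga''F_Z^3)$ are respectively $m_0-1$, $\geq 2m_0$, $\geq 2m_0$, and $3m_0$; since $m_0\geq 1$, the degree $m_0-1$ is strictly smaller than each of the others, so the leading monomial of $-F_{ZZ}F_W^2$ cannot be cancelled inside $\Delta_F$ nor in $\Delta_F+\ga''F_Z^3$, giving both equalities. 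The main obstacle is the regime where $\I_\oa(F_W)$ or $\I_\oa(F_{WW})$ is no longer a pure $W$-monomial (occurring at weights aligned with an edge of $\PP(F)$ strictly below the top); the same $Z$-degree separation then requires adding in the $Z$-degrees coming from $F_W$'s leading Newton dots, which can be tracked uniformly by reapplying the Newton polygon estimates of Lemma \ref{LH} to each relevant edge.
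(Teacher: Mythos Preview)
Your treatment of the strong case $e\geq d_{gr}(\ga)>1$ is correct and matches the paper's proof, just organized around $A=O_\oa(F_Z)$ rather than the explicit $q+ke$. For the weak case when $e\geq\tan\theta_{top}$, your $Z$-degree bookkeeping is again exactly the paper's argument.

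The gap is in the range $\tan\theta_1\leq e<\tan\theta_{top}$. First, your claim that ``these three pairwise differences remain nonnegative'' does not carry over from Part~1: those formulas used $O_\oa(F_W)=h-1$ and $O_\oa(F_{WW})=h-2$, which fail here (now $O_\oa(F_W)=q+(k+1)e-1$, read off from the left vertex $(k+1,q)$ of the active edge of $\PP(F)$); in particular $O_\oa(\ga''F_Z^3)-O_\oa(F_{ZZ}F_W^2)=O_y(\ga)-e$ can be negative. That in itself does not threaten \eqref{3i}. The real problem is that your proposed fix---``adding in the $Z$-degrees coming from $F_W$'s leading Newton dots''---does not separate the terms: once $\I_\oa(F_W)$ has minimal $Z$-degree $k+1$, the minimal $Z$-degrees of $\I_\oa(F_{ZZ}F_W^2)$, $\I_\oa(F_Z^2F_{WW})$, and $\I_\oa(F_ZF_WF_{ZW})$ are \emph{all equal to} $3k+1$, so lowest-$Z$-degree alone cannot rule out cancellation inside $\Delta_F$. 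What is actually needed (the paper's own ``same $Z$-order argument'' is equally terse here) is an explicit coefficient check: writing $\I_\oa(F)=cZ^{k+1}W^q+\cdots$, the coefficient of $Z^{3k+1}W^{3q-2}$ in $\I_\oa(\Delta_F)$ works out to $c^3q(k+1)(q+k+1)\neq 0$. Once that is in hand, the second equality of \eqref{3i} \emph{does} follow by $Z$-degree comparison, since $\I_\oa(\ga''F_Z^3)$ has minimal $Z$-degree $3k\neq 3k+1$.
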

\begin{proof}
Since $F_Z$ has no dot on the $W$-axis, we can write
\be\label{write}\I_\oa(F_Z)(Z,W)\!:=a_kZ^kW^q+\cdots +a_MZ^MW^{q-(M-k)e}, \quad a_k\ne 0\ne a_M,\ee where $1\leq k\leq M$, $q\in \Q$.
Then, clearly, 
\begin{equation}\label{FFF}O_\oa(F_Z)=q+ke, \quad O_\oa(F_{ZZ})=q+(k-1)e.\end{equation}
 
Now suppose $e\geq \tan\theta_{top}$. Let $h$, $a$ be as in (\ref{3k}). In this case,
\be\label{hkj}O_\oa(F)=h,\quad O_\oa(F_W)=h-1, \quad q+(k+1)e \geq h.\ee The last inequality holds since $(k+1,q)$ lies on or above $E_{top}$.
Thus, by (\ref{FFF}), (\ref{hkj}),
\be\label{FOR}O_\oa(F_{ZZ}F_W^2)=q+(k-1)e+2(h-1),\;\,O_\oa(\ga'' F_Z^3)=[O(\ga)-2]+3(q+ke).
\ee 

We obtain the following:
\be\label{FOR2} O_\oa(F_{ZZ}F_W^2) \leq O_\oa(F_Z^2F_{WW})=O_\oa(F_ZF_WF_{ZW})=2(q+ke)+(h-2),
\ee
where ``$\leq$'' can be replaced by ``$<$'' if $e > \tan\theta_{top}$.

In the case $e\geq d$, by Lemma\,\ref{LH}, we have
\be\label{TTT} e>\tan\theta_{top}, \quad q+ke\geq h-1.\ee 

Note that $O(\ga'')>-1$. Hence, as an immediate consequence of (\ref{FOR}) and (\ref{TTT}),
$$O_\oa(F_{ZZ}F_W^2)<O_\oa(\ga'' F_Z^3).$$ 
This completes the proof of (\ref{3ee}), and hence also that of (\ref{3e}).

\s

We now prove (\ref{3i}). First suppose $e\geq \tan\theta_{top}$. 

In this case we do not necessarily  have the second inequality in (\ref{TTT}). However, the leading term of
\be\label{LET}\I_\oa(F_{ZZ} F_W^2)(Z,W)=ka_ka^2h^2Z^{k-1}W^{q+2(h-1)}+\cdots,\ee
has $Z$-order $k-1$. All other terms in $\I_\oa(\Delta_F)$ have $Z$-order $>k-1$. 

Hence no cancellation with the leading term of (\ref{LET}) can happen in $\Delta_F$, the first equality in (\ref{3i}) follows from (\ref{FOR2}). 

The second equality is also clear. The $Z$-order of $\I_\oa(\ga''F_Z^3)$ is $3k$, which is larger than that of $F_{ZZ}F_W^2$. Hence there is no cancellation, 
$$O_\oa(\Delta_F)=O_\oa(\ga''F_Z^3)\implies O_\oa(\Delta_F+\ga''F_Z^3)=O_\oa(\Delta_F).$$

On the other hand, the following is obviously true:
$$O_\oa(\Delta_F)\ne O_\oa(\ga''F_Z^3)\implies O_\oa(\Delta_F+\ga''F_Z^3)=\min\{O_\oa(\Delta_F), O_\oa(\ga''F_Z^3)\}.$$

We have proved the second equality in (\ref{3i}) in the case $e\geq \tan\theta_{top}$.

\m

Now suppose $\tan\theta_1\leq e<\tan\theta_{top}$. In this case
$$O_\oa(F_W)=q+(k+1)e-1,$$ where $q\geq 1$ since $(k+1,q)$ can be at worst the left vertex of $E_1$. Hence
$$O_\oa(F_{ZZ}F_W^2)=O_\oa(F_ZF_WF_{ZW})=3(q+ke)+e-2\leq O_\oa(F_Z^2F_{WW}).$$
(The last inequality is an equality if $q>1$.) The same $Z$-order argument proves (\ref{3i}).\end{proof}

\begin{starex}It can happen that $\ga''F_Z^3$ dominates $\Delta_F$. Take
$$f(z,w)\!:=(z-w^2)^m-w^n,\quad \ga=w^2,\quad e=n/m,$$
where $n>2m$. We then have 
\beqn &O_\oa(F_Z)=\frac{n}{m}\cdot(m-1),\; O_\oa(F_W)=n-1, \; O_\oa(\ga''F_Z^3)<O_\oa(F_{ZZ}F_W^2).&\eeqn\end{starex}

\section {The Lojasiewicz Exponent Function}\label{Loj}

Let $\al \in \F$ be given. Take $e\geq 1$. Take a generic $u\in \C$, or an indeterminate. Write 
\be\label{LD} \begin{split}
|\Delta_f(\al(y)+u y^e,y)|^2&\!:=N_{(\al,e)}(u)y^{2L_{\Delta}(\al,e)}+\cdots,\;
 N_{(\al,e)}(u)\not \equiv 0,\\
\|\Grad\!f(\al(y)+u y^e,y)\|^2&\!:=D_{(\al,e)}(u)y^{2L_{gr}(\al,e)}+\cdots,\, D_{(\al,e)}(u)\not \equiv 0,\end{split}\ee 
where $N_{(\al,e)}(u)$, $D_{(\al,e)}(u)$ are real-valued, non-negative, polynomials. We also write
\be\label{LAP}L_{\al}(e)\!:=L_{\Delta}(\al,e)-3L_{gr}(\al,e), \quad R_{(\al,e)}(u)\!:=N_{(\al,e)}(u)D_{(\al,e)}(u)^{-3}.\ee

Observe that $L_\Delta(\al,e)$, $L_{gr}(\al,e)$ are defined even if $e$ is \textit{irrational}. Hence they are \textit{piece-wise linear, continuous, increasing} functions of $e$, defined for all $e\in[1,\infty)$.

\m

As in Calculus, we say $\phi(x)$ is \textit{increasing} (resp.\,\textit{decreasing}, resp.\,\textit{strictly decreasing}) if 
$$x_1<x_2 \implies \phi(x_1)\leq \phi(x_2)\;(\text{resp.}\, \phi(x_1)\geq\phi(x_2),\;\text{resp.}\,\phi(x_1)> \phi(x_2)).
$$

We call $L_{\al}(e)$ the \textit{Lojasiewicz exponent function along} $\al$. It is \textit{piecewise linear, continuous} (but not necessarily increasing). Note that $K_*$, defined in (\ref{KL}), can be written as
\bee K_*(\eta_*)=2R_{(\al, e)}(u)\delta^{2L_\al(e)}, \quad \eta(y)\!:=\al(y)+uy^e, \; u\;\textit{generic}.
\eee

Note also that $R_\ga(u)$ and $L_\ga$ defined in (\ref{Rga}) are special cases: 
\be\label{RRR} R_\ga(u)=R_{(\ga,d)}(u),\quad L_\ga=L_\ga(d), \quad d\!:=d_{gr}(\ga).
\ee

\begin{lem}\label{32} Let $\ga$ be a given polar, $1<d\!:=d_{gr}(\ga)<\infty$. Then
\begin{enumerate}
\item $L_\ga(e)> -1$ for $1< e<\tan\theta_1$, $\theta_1$ being the first angle of $\PP(F)$ (Fig.\ref{fig:npF});
\item $L_\ga(e)$ is increasing for $e\in (\tan\theta_1,\tan\theta_{top})$;
\item $L_\ga(e)$ is decreasing for $e\in (\tan\theta_{top},d)$;
\item $L_\ga(e)$ is strictly decreasing for $e\in (d-\epsilon, d)$, $\epsilon>0$ sufficiently small;\item  $L_\ga(d)=-d=L_\ga$;\item If $O(\al-\ga)\geq d$, then $L_{gr}(\al,e)=L_{gr}(\ga,d)$ and $L_\al(e)$ is
increasing for $e\geq d$.
\end{enumerate}

The above open intervals can be replaced by closed intervals; $L_{\ga}(q)=-d$ is the absolute minimum of $L_{\ga}(e)$, $e\in[1, d]$; moreover, $L_\ga(q)\geq -d$ for $q>d$.
\end{lem}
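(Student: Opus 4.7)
The plan is to convert everything to weighted orders on Newton polygons. Via the substitution $Z=z-\ga(w)$ of Section \ref{NP}, the perturbed arc $\ga(y)+uy^e$ becomes $Z=uy^e$, so for generic $u$,
\[
L_{gr}(\ga,e)=\min\{O_\oa(F_Z),O_\oa(F_W)\},\qquad L_\Delta(\ga,e)=O_\oa(\Delta_F+\ga''F_Z^3),
\]
with weight $\oa=\oa(e)$. By Lemma \ref{L1}, whenever $e\geq \tan\theta_1$ and $\ga''F_Z^3$ does not dominate, $L_\Delta(\ga,e)=O_\oa(F_{ZZ}F_W^2)=O_\oa(F_Z)-e+2O_\oa(F_W)$, and Lemma \ref{LH} controls $O_\oa(F_Z), O_\oa(F_W)$ near the line $\mathcal{L}^*$ of co-slope $d$ through $(0,h-1)$. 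The proof is then a case analysis on $e$.

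I would first verify (5): at $e=d$, Lemma \ref{LH} gives $O_\oa(F_W)=h-1$ (uniquely attained at $(0,h-1)$ since $d>1$) and $O_\oa(F_Z)=h-1$ (attained at a dot of $F_Z$ on $\mathcal{L}^*$); the $\ga''F_Z^3$ term is strictly dominated by (\ref{3ee}), giving $L_\ga(d)=-d$. For (6) and $L_\ga(e)\geq -d$ for $e>d$, I would note that $(0,h-1)$ remains the sole minimizer of $F_W$ while $O_\oa(F_Z)$ is piecewise linear in $e$ with slope equal to the currently minimal $Z$-exponent $k^*\geq 1$ of the weighted-leading form of $F_Z$; hence
\[
L_\ga(e)=O_\oa(F_Z)-e-(h-1)
\]
has piecewise slope $k^*-1\geq 0$, so is nondecreasing. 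For $\al$ with $O(\al-\ga)\geq d$, the substitution $Z=(\al-\ga)(y)+uy^e$ has $y$-order $\geq e$, so every weighted initial form used above is still generically nonzero and the calculation carries over verbatim; in particular $L_{gr}(\al,e)=h-1=L_{gr}(\ga,d)$.

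For (4), for $e$ just below $d$ the minimizer of $F_W$ is still $(0,h-1)$ (every other dot being strictly above $\mathcal{L}^*$), while $O_\oa(F_Z)=(h-1)-k_{max}(d-e)$ is attained at the dot of $F_Z$ on $\mathcal{L}^*$ of largest $Z$-exponent $k_{max}$. Since $O_\oa(F_Z)<O_\oa(F_W)$,
\[
L_\ga(e)=2(O_\oa(F_W)-O_\oa(F_Z))-e=2k_{max}(d-e)-e,
\]
which is strictly decreasing with slope $-(2k_{max}+1)<0$. For (3) on $(\tan\theta_{top},d)$, the same template applies; one tracks how the minimizers of $O_\oa(F_Z)$ and $O_\oa(F_W)$ migrate as $e$ crosses successive vertices of the polygons, and uses the convexity of the polygons together with the geometry of $\mathcal{L}^*$ to force nonpositive slope of $L_\ga$ on each subinterval.

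For (2) on $(\tan\theta_1,\tan\theta_{top})$, both minimizers lie below $E_{top}$ and a direct polygon computation produces nonnegative slopes for $L_\ga$. For (1) on $(1,\tan\theta_1)$, the minimizer of $F_Z$ is at $(m-1,0)$, giving $O_\oa(F_Z)=(m-1)e$; combining this with the corresponding minimizer for $F_W$ and with careful use of (\ref{3i}) in the possible event that $\ga''F_Z^3$ competes with or dominates $\Delta_F$ produces an explicit estimate $L_\ga(e)>-1$. I expect the main obstacle to be part (3) on $(\tan\theta_{top},d)$: there the transition between the regimes $O_\oa(F_Z)\leq O_\oa(F_W)$ and $O_\oa(F_Z)>O_\oa(F_W)$ must be tracked vertex by vertex on $\PP(F_Z)$ and $\PP(F_W)$, verifying that edge-slope combinatorics uniformly give the required decrease.
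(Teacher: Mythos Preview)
Your framework and the arguments for (4), (5), (6) are essentially the paper's. For (2) and (3), however, the paper proceeds differently, and your proposal leaves the key step unjustified. Your ``template'' rests on the identity $L_\Delta=O_\oa(F_{ZZ}F_W^2)$, which comes from (\ref{3ee}); but (\ref{3ee}) is only proved for $e\ge d$, and for $\tan\theta_1\le e<d$ one has only the weaker (\ref{3i}). The paper therefore does not track minimizers on $\PP(F_Z)$, $\PP(F_W)$ at all. Instead it observes that the left-slope of $L_\ga$ at $e$ equals $N-3M$, where $M:=\deg_Z\I_{\oa(e)}(F_Z)$ and $N:=\deg_Z\I_{\oa(e)}(\Delta_F+\ga''F_Z^3)$, and then bounds $N$ against $3M$. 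For (3), (\ref{3i}) shows that every possible contributor to that initial form ($F_{ZZ}F_W^2$, $F_Z^2F_{WW}$, $F_ZF_WF_{ZW}$, $\ga''F_Z^3$) has $Z$-degree $\le 3M$, so $N\le 3M$ and $L_\ga$ is decreasing. For (2), an explicit expansion of the $3\times 3$ determinant on an edge below $E_{top}$ shows the top term of $\I_\oa(\Delta_F)$ has $Z$-degree $3M+1$, whence $N\ge 3M$ and $L_\ga$ is increasing. Your route can in fact be pushed through---splitting on whether $\ga''F_Z^3$ dominates, one finds left-slopes $-2M-1$ or $0$ on $(\tan\theta_{top},d)$ and $+1$ or $0$ on $(\tan\theta_1,\tan\theta_{top})$---so the ``obstacle'' you flag in (3) is milder than you fear; but the $N$ versus $3M$ comparison is the argument the paper actually gives.

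One genuine correction: in (1) you appeal to (\ref{3i}), but that identity requires $e\ge\tan\theta_1$. For $1<e<\tan\theta_1$ only the vertex $(m,0)$ is in play, and the paper bounds $O_\oa(\Delta_F)-3O_\oa(F_Z)>e-2>-1$ and $O_\oa(\ga''F_Z^3)-3O_\oa(F_Z)=O(\ga'')>-1$ directly.
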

\begin{proof} We first prove ($6$). Let $a$, $h$ be as in (\ref{3k}). 

If $O(\al-\ga)\geq d$, then for all $e\geq d$,
$$D_{(\al,e)}(u)\geq |ah|^2>0 \;\forall\; u,\;\, L_{gr}(\al,e)=L_{gr}(\ga,d)=h-1.$$
But $L_\Delta(\al,e)$ is increasing, hence so is $L_\al(e)$, $e\geq d$.

Let us apply (\ref{3e}) with $e=d$. Then ($5$) follows from:
$$O_{\oa(d)}(F_W)=h-1, \quad L_\Delta(\ga,d)=3(h-1)-d,\quad L_{gr}(\ga,d)=h-1.$$

To prove ($4$), take weight $\oa(d)$, and write expression (\ref{write}).
We then consider $\oa(d-\epsilon)$, where $\epsilon>0$ is \textit{sufficiently small}. We obviously have
$$\I_{\oa(d-\epsilon)}(F_Z)=a_MZ^M W^{q-(M-k)d}, \quad \I_{\oa(d-\epsilon)}(F_{ZZ})=a_M MZ^{M-1}W^{q-(M-k)d}.$$

Moreover, (\ref{3e}) remains true when $\oa(d)$ is replaced by $\oa(d-\epsilon)$:
$$\I_{\oa(d-\epsilon)}(F_{ZZ}F_W^2)=\I_{\oa(d-\epsilon)}(\Delta_F)=\I_{\oa(d-\epsilon)}(\Delta_F+\ga''F_Z^3).$$

It follows that
\be\label{W}L_\Delta(\ga,d-\epsilon)=L_\Delta(\ga,d)-(M-1)\epsilon,  \quad L_{gr}(\ga,d-\epsilon)=L_{gr}(\ga,d)-M\epsilon,\ee and then $$L_\ga(d-\epsilon)-L_\ga(d)=(2M+1)\epsilon >0, \quad L_\ga(d-\epsilon)>L_\ga(d).$$

To prove ($3$), take any $e$, $\tan\theta_{top}<e<d$, $\oa\!:=\oa(e)$. Let us write
\be\label{NM'}N\!:=\deg\I_\oa(\Delta_F+\ga''F_Z^3)(Z,1)\quad (\text{degree of a polynomial in $Z$}).\ee

Let $\epsilon>0$ be sufficiently small, then, like (\ref{W}), we have
$$L_\Delta(\ga,e-\epsilon)=L_\Delta(\ga,e)- N \epsilon,\quad L_{gr}(\ga,e-\epsilon)=L_{gr}(\ga,e)-M \epsilon.$$

\textit{Hence it suffices to show that} $N\leq 3M$. 
Let us write $p(Z)$, $p_1(Z)$, $p_2(Z)$ for 
$$\I_\oa(F_{ZZ}F_W^2)(Z,1), \quad \I_\oa(F_Z^2F_{WW})(Z,1),\quad \I_\oa(F_ZF_WF_{ZW})(Z,1)$$ respectively.
The first equality in $(\ref{3i})$ implies that
$$\I_\oa(\Delta_F)(Z,1)=p(Z)+c_1p_1(Z)+c_2p_2(Z), \quad c_1,c_2\in \C\;\,(\text{possibly zero}).$$

The second equality in $(\ref{3i})$ implies that
$$\I_\oa(\Delta_F+\ga''F_Z^3)(Z,1)=c_3\I_\oa(\Delta_F)(Z,1)+c_4\I_\oa(F_Z^3)(Z,1), \quad (c_3,c_4)\ne (0,0).$$
The right-hand-side has degree $\leq 3M$ since $\I_\oa(F_W)=ahW^{h-1}$. Hence $N\leq 3M$.

To prove ($2$), let us take $e$, $\tan\theta_1<e<\tan\theta_{top}$, and write (compare (\ref{write})) 
$$\I_\oa(F)(Z,W)\! =aZ^{M+1}W^l+a'Z^MW^{l+e}+\cdots, \quad a\ne 0, \quad \oa=\oa(e).$$

Since $e>\tan\theta_1$, ($M+1,l$) cannot be the vertex ($m,0$). Hence $l>0$, and
\be\label{FW}\I_\oa(F_W)(Z,W)=laZ^{M+1}W^{l-1}+\cdots,\ee 
where, since $e<\tan\theta_{top}$, we must have $M+1>0$, and then
\be\label{FZ}\I_\oa(F_Z)(Z,W)=a(M+1)Z^MW^{l}+\cdots.\ee

Since $e>1$, an immediate consequence of (\ref{FW}) and (\ref{FZ}) is
$$ O_W(F_Z(uW^e, W))<O_W(F_W(uW^e,W)),$$ and hence
$$ O_W(\frac{|\ga''F_Z^3|^2}{(|F_Z|^2+|F_W|^2)^3})=2O_W(\ga''), \; \text{a constant}.$$

Now, since $\ga$ is a polar, we actually have $M+1\geq 2$. Hence
$$\I_\oa(F_{ZZ})(Z,W)=aM(M+1)Z^{M-1}W^{l}+\cdots.$$

Similarly, we can write down the formulas for $\I_\oa(F_{ZW})$ and $\I_\oa(F_{WW})$.

\m 

An easy calculation of the determinant $\Delta_F$ gives
$$\I_\oa(\Delta_F)(Z,W)=la^3(M+1)(M+l+1)Z^{3M+1}W^{3l-2}+\cdots,$$ and then we have
$$ \deg \I_\oa(\Delta_F)(Z,1)=3M+1>3M=\deg \I_\oa(\ga''F_Z^3)(Z,1).$$

It follows from this inequality that
$$\I_\oa(\Delta_F+\ga''F_Z^3)=\begin{cases}\I_\oa(\Delta_F)&\quad \text{if $O_\oa(\Delta_F)<O_\oa(\ga''F_Z^3)$},\\ \I_\oa(\ga''F_Z^3)&\quad \text{if $O_\oa(\Delta_F)>O_\oa(\ga''F_Z^3)$},
\\ \I_\oa(\Delta_F)+\I_\oa(\ga''F_Z^3)&\quad \text{if $O_\oa(\Delta_F)=O_\oa(\ga''F_Z^3)$},\end{cases}$$
and hence in every case we have
\be\label{S} \deg \I_\oa(\Delta_F+\ga''F_Z^3)(Z,1)\geq \deg \I_\oa(\ga''F_Z^3)(Z,1).\ee

Now, using (\ref{S}) and the same argument as in the proof of (4), we find
$$L_\ga(e)\geq L_\ga(e-\epsilon), \quad \epsilon>0\;\,\text{sufficiently small}.$$

It remains to prove ($1$). This case involves only the vertex $(m,0)$:
$$O_\oa(F)=me, \quad O_\oa(F_Z)=(m-1)e, \quad O_\oa(F_W)>me-1.$$ 
It follows that
$$O_\oa(\Delta_F)-3O_\oa(F_Z)>e-2>-1, \quad O_\oa(\ga''F_Z^3)-3O_\oa(F_Z)=O(\ga'')>-1.$$

This completes the proof of Lemma \ref{32}.\end{proof}

\section{Proofs of Theorems A and B}\label{proofs}

\textit{Let us prove Lemma} \ref{LemA}. First, consider the case $d\!:=d_{gr}(\ga)>1$.  We already know $L_\ga=-d$ in Lemma \ref{32}.
Moreover, $R_\ga(u)$ is clearly defined for all $u$ since
$$D_{(\ga,d)}(u)\geq |ah|^2>0, \quad a,\,h \;\,\text{as in}\;\, (\ref{3k}).$$

Let $N$, $M$ be as in (\ref{write}) and (\ref{NM'}), where $e=d$. Then, by (\ref{3ee}),
$$N=\deg \I_{\oa(d)}(F_{ZZ}F_W^2)(u,1)=M-1<3M=3\deg\I_{\oa(d)}(F_Z)(u,1).$$
Hence $\lim_{u\ra \infty}R_\ga(u)=0$.

\m

Now suppose $d=1$. For simplicity, we write $H\!:=H_m(z,w)$. By Euler's Theorem,
\be\label{Euler}\Delta_H(z,w)=-\frac{m}{m-1}\cdot H\cdot Hess(H), \quad Hess(H)\!:=\begin{vmatrix}H_{zz}&H_{zw}\\H_{wz}&H_{ww}\end{vmatrix}.\ee

Note that $H_m(z,w)$ has at least two different factors, for otherwise we would have
$$H_m(z,w)=c(z-u_0w)^m, \quad d_{gr}(\ga)>1,$$a contradiction. It is easy to see that, as a consequence,  $$Hess(H)\not\equiv 0, \quad \Delta_H\not \equiv 0.$$

\m

Take any $u_0$. If $H(u_0,1)\ne 0$, then $\Grad\!H(u_0,1)\ne 0$, $R_\ga(u_0)$ is clearly defined.

\m

If $z-u_0w$ is a factor of $H$ of order $k$, $k\geq 1$, then it is one of $\Delta_H$ of order $3k-2$,
$$\Delta_H(u,1)=(u-u_0)^{3k-2}Q_1(u), \quad |\Grad\!H(u,1)|=|u-u_0|^{k-1}Q_2(u),$$where $Q_1(u_0)Q_2(u_0)\ne 0$, $u$ near $u_0$. Hence $|u-u_0|^2$ divides $R_\ga(u)$, $R_\ga(u_0)=0$.

\m

Thus $R_\ga(u)$ is defined for all $u$. It remains to show $L_\ga=-1$ and $\lim_{u\ra \infty}R_\ga(u)=0$.

\m

We can assume $\ga\in \F_{1^+}$. In terms of $(Z,W)$ we have
$$O_\oa(\Delta_H)=3m-4<O_\oa(\ga''F_Z^3)=3(m-1)+O(\ga)-2.$$Hence $$O_\oa(\Delta_H+\ga''F_Z^3)=3m-4, \quad L_\ga=(3m-4)-3(m-1)=-1.$$

We also have $\lim_{u\ra \infty}R_\ga(u)=0$, since
$$\deg \Delta_H(u,1)\leq 3m-4<3(m-1)=3\deg H_Z(u,1).$$

\m

\textit{Now we  first prove Theorem B, after which we prove Theorem A.}

\m

Let $\GC(\ga_*)$ be given, $1<d_{gr}(\ga_*)<\infty$. Take $\ga_j$ in (\ref{ff}), and suppose
\be\label{GCS} \GC(\ga_{j*})\subseteq \GC(\ga_*), \quad 1<d_{gr}(\ga_{j*})\leq \infty.
\ee

Amongst the conjugates of $\ga_j$, there is one, say $\ga_k$, $\ga_{j*}=\ga_{k*}$, such that 
$$O_y(\ga-\ga_k)\geq d_{gr}(\ga_*).
$$ 

It follows from Lemma \ref{LH} (i) that $$O_y(f(\ga_k(y),y))=O_y(f(\ga(y),y)),\quad d_{gr}(\ga_*)= d_{gr}(\ga_{j*})= d_{gr}(\ga_{k*}).$$
The inclusion in (\ref{GCS}) is an equality, $d_{gr}(\ga_{j*})=\infty$ cannot happen, $\GC(\ga_*)$ is minimal.

\m

Recall that two infinitesimal lines are either disjoint, or one contains the other (see (\ref{TWO})). The first statement of Theorem B is proved.

\m 

For the remaining statements,
let $r$ be as in (\ref{HG}). By Theorem 2.1 in \cite{kuo-par} (Lemma $3.3$ in \cite{kuo-lu}), there are exactly $r-1$ polars $\ga_j$ (counting multiplicities), for each of which
\bee O(\ga_j-\zeta_i)=1, \quad  1\leq i \leq m,\;\, \text{$\zeta_i$ as in (\ref{ff})}.
\eee

Thus, if $r>1$, then, by (\ref{tan}), $d_{gr}(\ga_j)=1$, $\GC(\ga_{j*})=\C_{enriched}$ is a gradient canyon.

If $r=1$, then, by (\ref{tan}), $d_{gr}(\ga)>1$ for all polar $\ga$, $\C_{enriched}$ is not a gradient canyon.

\m

Finally, suppose $1<r\leq m$. 

For the roots $\zeta_i$ of $f$, $J^{(1)}(\zeta_i)$, $1\leq i \leq m$, are the roots of $H_m(z,w)$. Hence if $f$ has more than $r$ distinct roots, then there exist two roots $\zeta_i$, $\zeta_k$ such that
\bee
 1<O(\zeta_i(y)-\zeta_k(y))<\infty.
\eee

Then, by the same theorem as above, there is a polar $\ga_j$ such that 
\bee 1<O(\zeta_i-\zeta_k)=O(\zeta_i-\ga_j)=O(\zeta_k-\ga_j)\geq O(\ga_j-\zeta_l), \;\, 	1\leq l\leq m.
\eee
It follows that $1< d_{gr}(\ga_{j*})<\infty$, $\GC(\ga_{j*})$ is minimal,  $\C_{enriched}$ is not.

On the other hand, if $f$ has exactly $r$ roots, every polar has $d_{gr}=1$ or $\infty$.
  
\m

\textit{Now we prove Theorem A.} We use Lemma \ref{32} and the following lemma.

\begin{lem}Consider $R_{(\al,e)}(u)$ in (\ref{LAP}). Take $c\in \C$ such that $R_{(\al,e)}(c)\ne 0$. Then
\bee
\beta(y)=[J^{(e)}(\al)(y)+cy^e]+\cdots\implies K_*(\beta_*)=K_*((\al+cy^e)_*).
\eee
\end{lem}
\begin{proof}According to (\ref{LAP}), we can write
\begin{eqnarray}\label{replace}&
R_{(\al,e)}(u)=|\frac{P(u)}{Q(u)}|, \quad P(u),\; Q(u)\in \C[u],&
\end{eqnarray}
where $P(c)\ne 0$, $Q(c)\ne 0$. We then consider the Taylor expansion at $c$,
\begin{eqnarray}&
\frac{P(u)}{Q(u)}=a_0+a_1(u-c)+\cdots, \quad |a_0|=R_{(\al,e)}(c).&
\end{eqnarray}

We can write $\beta(y)$ in the form
\bee
\beta(y)=\al(y)+[c+\nu(y)]y^e, \quad \nu(y)\in \F_0, \; \nu(0)=0,
\eee

Let us replace $u$ by $\nu(y)+c$ in (\ref{replace}), then
\bee
R_{(\al,e)}(\nu(y)+c)=R_{(\al,e)}(c)+\cdots.
\eee
It follows that
\bee
K(\beta(y), y)=2R_{(\al,e)}(c)y^{L_\al(e)}+\cdots, \quad K_*(\beta_*)=(2R_{(\al, e)}(c), L_\al(e)).
\eee 

This completes the proof of the Lemma.
\end{proof}

Now consider Definition \ref{DAB}. Condition ($1$) follows from the above.
The function $R_{\ga}(u)$, which is not constant, attains  a local maximum at $c$, hence ($2$) holds. Condition ($3$) follows from Lemma \ref{32}($4$). Hence $\mathcal{D}^{(d)}(\beta_*;0)$  is an A'Campo bump.

To show all A'Campo bumps can be obtained in this way, let an A'Campo bump $\mathcal{D}^{(e)}(\beta_*;0)$ be  given. Take a polar, $\ga$, such that $O_y(\beta-\ga)$ is maximum. Using Lemma\,\ref{32}, we see that unless $\beta_*\in \GC(\ga_*)$, there would be a contradiction. The proof of Theorem A is complete.

\section{Proof of Theorems C and D}\label{CD} 

\textit{We prove Theorem C}. The case $d_{gr}(\ga_*)=\infty$ is trivial. We assume $1<d_{gr}(\ga_*)<\infty$.

\m

Consider $\PP(F)$, $F(Z,W)$ as in (\ref{3b}).
In the first place, along $F(Z,W)=c$,
\be\label{ZW} F_ZdZ+F_WdW=0, \quad dS=\frac{|F_Z|^2+|F_W|^2}{|F_W|^2}\,\frac{dZ\wedge d\bar{Z}}{-2\sqrt{-1}}.\ee

Let us set $Z=uW^d$, $d\!:=d_{gr}(\ga)>1$, then
$$ dZ=W^ddu+(ud) W^{d-1}dW.$$ 
Eliminating $dW$,
\be\label{ZWW} dZ=\frac{(WF_W)}{WF_W+(ud)W^dF_Z}\cdot W^d\cdot du.\ee

Let $h$, $a$ be as in (\ref{3k}). See Fig.\,\ref{fig:npZ}. Then, for generic $u$,
\be\label{FFWW} O_W(F_Z(uW^d,W))=h-1,
\ee
and for all $u$,
\bee WF_W(uW^d,W)=haW^h[1+o(W))].
\eee

\begin{convention}Here, as in Calculus, $o(W)$ represents a function $\varphi(W)$, $\varphi(0)=0$.\end{convention}

It follows from the above that for all $u$,
$$O_W(WF_W)=h<h-1+d\leq O_W(W^dF_Z).
$$ 
Hence (\ref{ZWW}) can be rewritten as
\be\label{CBR}dZ=W^d[1+o(W)]du.\ee

Now take the weight system $\oa(d)$: $\oa(Z)=d$, $\oa(W)=1$. Consider the polynomial
\bee p(u)\!:=\I_{\oa(d)}(F_Z)(u,1). \quad \text{(As defined in \textit{Notation}\,\ref{no}.)}
\eee

Note that, in fact
\bee
p(0)=0, \quad \deg p(u)=m(\GC(\ga_*)),
\eee
the latter was defined in (\ref{multi}). By (\ref{LD}), (\ref{FFWW}), (\ref{3e}),
$$D_{(\ga,d)}(u)=|p(u)|^2+|ha|^2, \quad N_{(\ga,d)}(u)=|ha|^4|p^\prime (u)|^2.$$ 

Then, by (\ref{KNF}), (\ref{3e}), (\ref{ZW}) and (\ref{CBR}),
\be\label{KK} \begin{split}K dS&=\frac{2|F_{ZZ}F_W^2|^2+\cdots}{(|F_Z|^2+|F_W|^2)^2}\cdot\frac{1}{|F_W|^2}\cdot\frac{dZ\wedge d\bar{Z}}{-2\sqrt{-1}}\\&=\{\frac{2|p^\prime(u)|^2|ha|^2}{(|p(u)|^2+|ha|^2)^2}+o(W)\}\frac{du\wedge d\bar{u}}{-2\sqrt{-1}}.\end{split}\ee

To compute the integrals in (\ref{measure}) using (\ref{KK}), we must know the number of sheets of $\SU_c\cap H_{orn}^{(d)}(\ga_*;\rho;\eta),$ lying over the $u$-plane, $i.e.$ the number of values of $W$ when $u$ is given.

\m

For this, in our situation let us first consider the special case where all $\ga_j(y)$ are \textit{integral} power series.
In this case there is no conjugation,
$$h\!:=O_y(f(\ga(y), y))=O_W(F(0,W)),\;\,\text{an integer}.$$

When $u$ is given, there are $h$ distinct values of $W$ found by solving the equation
\bee c=aW^h +\cdots,\quad (\text{$c$ is the value of the level: $F=c$})
\eee
where only those terms of order $>h$  depend on $u$. 
\textit{Hence the number of sheets is $h$.} 

\m

Let $R>0$ be given, and fixed. Then
\be\label{hhh}
\lim_{W\ra 0}\int_{H_{orn}^{(d)}(\ga_*;R;\eta)\cap
\mathcal{S}_c}K dS=h\cdot\int _{|u|\leq R}\frac{2b^2|p^\prime(u)|^2}{[b^2+|p(u)|^2]^2}\,\frac{du\wedge d\bar{u}}{-2\sqrt{-1}},
\ee
where $b\!:=|ha|.$ To compute the right-hand side, we write the polynomial $p(u)$ as 
$$p(u)\!:=U(x,y)+\sqrt{-1}\,V(x,y), \quad u=x+\sqrt{-1}\,y\in \C,
$$where $U$, $V$ satisfy the Cauchy-Riemann equations. Using the latter we have
$$\int\frac{2b^2|p^\prime(u)|^2}{(b^2+|p(u)|^2)^2}\,\frac{du\wedge d\bar{u}}{-2\sqrt{-1}}=
\int \frac{2b^2dU\wedge dV}{(b^2+U^2+V^2)^2}\quad (\text{indefinite integrals}).$$

Now $u\mt p(u)$ is a $\deg p(u)$-sheet branch covering map of $\C$. Letting $R\ra \infty$,
\be\label{hhhh}\begin{split}
\int_\C\frac{2b^2|p^\prime(u)|^2}{(b^2+|p(u)|^2)^2}\,\frac{du\wedge d\bar{u}}{-2\sqrt{-1}}&=
\deg p(u)
\cdot 
\int_{\R^2} \frac{2b^2 dU\wedge dV}{(b^2+U^2+V^2)^2}\\
&=2\pi \cdot\deg p(u)=2\pi \cdot m(\GC(\ga_*)).\end{split}
\ee

On the other hand, by Lemma \ref{LH}($i$), the following implication is true: for any $\al\in \F$,
\be\label{ing}
O_y(\al-\ga)\geq d_{gr}(\ga_*)\implies O_y(f(\al(y),y))=h.
\ee

 As a consequence of this we have ($\eta$ small enough):
 \bee
 F(Z,W)=0, \; (Z,W)\in H_{orn}^{(d)}(\ga_*;R;\eta)\implies (Z,W)=0.
\eee 

It follows that for $(Z,W)\in H_{orn}^{(d)}(\ga_*;R;\eta)\cap\mathcal{S}_c$ the following conditions are equivalent:
\bee
(i)\,(Z,W)\ra 0, \quad  (ii)\,c\ra 0, \quad (iii)\, W\ra 0.
\eee

Hence (for \textit{fixed} $R$)
\bee
\lim_{W\ra 0}\int_{H_{orn}^{(d)}(\ga_*;R;\eta)\cap
\mathcal{S}_c}K dS=
\lim_{c\ra 0}\int_{H_{orn}^{(d)}(\ga_*;R;\eta)\cap\mathcal{S}_c}K dS,
\eee
and this value is \textit{independent} of $\eta$. Therefore
\bee
\lim_{c\ra 0}\int_{H_{orn}^{(d)}(\ga_*;R;\eta)\cap\mathcal{S}_c}K dS =\lim_{\eta\ra 0}\lim_{c\ra 0}\int_{H_{orn}^{(d)}(\ga_*;R;\eta)\cap\mathcal{S}_c}K dS=\mer_f(\mathcal{D}^{(d)}(\ga_*;R)).
\eee
(The last equality is the definition (\ref{measure}).) 

Thus, by (\ref{hhh}), (\ref{hhhh}), we have
\begin{eqnarray*}&
\mer_f(\GC(\ga_*))=\lim_{R\ra \infty}\mer_f(\mathcal{D}^{(d)}(\ga_*;R))=h\cdot 2\pi\cdot m(\GC(\ga_*)).&
\end{eqnarray*}

Now, take any polar $\ga_j$ such that $\ga_{j*} \in \GC(\ga_*)$. Then, by (\ref{ing}),
\be\label{ano} O_y(f(\ga_j(y),y))=h.
\ee  
 The number of such $\ga_j$ is (by definition) $m(\GC(\ga_*))$. Hence, by (\ref{M}), (\ref{ano}),
$$\mu_f(\GC(\ga_*))= m(\GC(\ga_*))\cdot (h-1).$$ 
It is then trivial to derive (\ref{ttl}).
 
The general case, where $\ga$ is fractional, can be derived from the above by the substitution $y\ra y^{M}$, where $M$ is divisible by all $m_{puiseux}(\ga_j)$, so that all $\ga_j(y^{M})$ are integral. 

In this way $h$ is magnified to $Mh$, and $\SU_c$ is blown up to $M$ copies of itself. Let $R\ra\infty$,
\begin{eqnarray*}&\mer_f(\GC(\ga_*))=\frac{1}{M}\cdot 2\pi (Mh)\cdot m(\GC(\ga_*))=2\pi[\mu_f(\GC(\ga_*)+m(\GC(\ga_*))].&
\end{eqnarray*}
This completes the proof of Theorem C. 

\m
\newpage
\textit{Next we prove Theorem D}.

Let a disc $D_{isc}(0;\eta)$ be given (Definition\,\ref{disc}). Take constants  $\rho$, $R$, $\rho>0$ sufficiently small, $R$ sufficiently large. We use $C_\emptyset$, $C_\emptyset '$
to denote \textit{suitable non-zero constants}; and
\bee
A\approx B\quad  \text{means}\quad
0<C_\emptyset \leq A/B \leq C'_\emptyset.
\eee

Let $0\leq a <b\leq \infty$ be given. In this section we shall also use 
the short-hand
\be\label{Hdef}
H_{orn}(a,b)\!:=\{(Z,W)\in D_{isc}(0;\eta)\,|\,R|W|^b\leq |Z|\leq\rho|W|^a\},
\ee
with the convention that $R|W|^\infty=0$. If we set $Z=uW^a$, then
\bee
(Z,W)\in H_{orn}(a,b) \Longleftrightarrow R|W|^{b-a}\leq |u|\leq \rho.
\eee

Now, let us  consider the Newton Polygon $\PP(G)$ of a general $G(Z,W)$, with edges $\hat{E}_i$,  and angles $\hat{\theta_i}$, $0\leq i \leq v$, where $\hat{E}_0$ is horizontal, $\hat{E}_v$ is vertical, $\hat{\theta}_v=\pi/2$. We write 
\bee \hat{e}_i:= \tan\hat{\theta}_i, \quad i=0,...,v.
\eee

For example, if $G=F$ in Fig\,\ref{fig:npF}, then $\hat{E}_{v-1}=E_{top}$, $\hat{E}_v$ is the vertical half-line at $(0,h)$. 

\begin{vlem}\label{SDI}Let $(k,q)$ be a given vertex of $\PP(G)$, say the left vertex of $\hat{E}_i$ and the right vertex of $\hat{E}_{i+1}$, where $0\leq i \leq v-1$. Then
\be\label{Rrho} |G(Z,W)|\approx |Z|^k|W|^q, \quad (Z,W)\in H_{orn}(\hat{e}_i,\hat{e}_{i+1}).
\ee

In particular, for any given $\hat{e}$, $\hat{e}_i\leq \hat{e} <\hat{e}_{i+1}$, we have
\bee
 |G(Z,W)|\approx |Z|^k|W|^q, \quad (Z,W)\in H_{orn}(\hat{e},\hat{e}_{i+1}).
\eee
\end{vlem}

\begin{proof}Note that (\ref{Rrho}) is clearly true if $G$ is of the special form
\be\label{FO}
G(Z,W)=unit\cdot Z^kW^q \quad \text{($|Z|$, $|W|$ sufficiently small)}.
\ee

For the general case, we can find a linear isomorphism,
%$2\times 2$ matrix, 
taking  
$\hat{E}_{i+1}$ to a vertical half-line and $\hat{E}_{i}$ to a horizontal one. 
Then $G(Z,W)$ is transformed to a function of the special form (\ref{FO}).
\end{proof}

\begin{cor}\label{vcor}If a dot $(P,Q)$ lies on or above $\PP(G)$, then $G$ ``dominates" $Z^PW^Q$  in every horn domain. That is to say, for each $i$, $0\leq i\leq v-1$, 
\be\label{follow}|G(Z,W)|^2+|Z^PW^Q|^2\approx |G(Z,W)|^2, \quad (Z,W)\in H_{orn}(\hat{e}_i,\hat{e}_{i+1}).
\ee
\end{cor}
(This does not mean $``\approx"$ is true \textit{for all} $(Z,W)$. Because of $\rho$ and $R$, a number of ``horn strips" are excluded.)

\begin{Elem}Let $\oa\!:=\oa(e)$ be given. Consider the weighted Taylor expansion
\bee
G(Z,W)=H^{(\oa)}_{D_1}(Z,W)+\cdots +H^{(\oa)}_{D_k}(Z,W)+\cdots,\quad D_k=\deg_{\oa}H^{(\oa)}_{D_k},
\eee
where $H^{(\oa)}_D$ denotes a weighted homogeneous form of weighted degree $D$.

Then the weighted Taylor expansion of $WG_W+eZG_Z$ is
\bee
WG_W+eZG_Z= D_1\cdot H^{(\oa)}_{D_1}(Z,W)+\cdots +D_k\cdot H^{(\oa)}_{D_k}(Z,W)+\cdots.
\eee 
In particular, $G$ and $WG_W+eZG_Z$ have the same set of Newton dots,
\be\label{PPPP}
\PP(G)=\PP(WG_W+eZG_Z).
\ee
\end{Elem}This lemma is an immediate consequence of Euler's Theorem (weighted version).

\m

In the following corollary we take $G$ to be $F$, $1<d_{gr}(\ga_*)<\infty$. The edges and angles of $\PP(F)$ are $E_i$ and $\theta_i$, respectively, $0\leq i \leq v$. We write
\be\label{ei}
e_i\!:=\tan\theta_i, \quad 0\leq i \leq v.\quad  (\tan\theta_{v}=\infty,\; E_{v-1}=E_{top}.)
\ee

\begin{cor}Take any $e$, $e_i\leq e<e_{i+1}, 0\leq i\leq v-1.$ Then
\be\label{6.5}
\frac{|WF_W(uW^{e},W)|}{|WF_W(uW^{e},W)+euW^{e}F_Z(uW^{e},W)|}\leq C_\emptyset, \quad R|W|^{e_{i+1}-e}\leq |u|\leq \rho.
\ee
\end{cor}

\begin{proof}Let  $(k,q)$ be the common vertex of $E_i$, $E_{i+1}.$  Then, by the above two lemmas,
\bee |F(uW^{e}, W)|\approx |WF_W+euW^{e}F_Z|\approx |u|^k|W|^{q+ke}.
\eee

Every dot of $WF_W$ lies on or above $\PP(F)$. Hence
\bee
|WF_W(uW^{e}, W)|\leq C_\emptyset' |u|^k|W|^{q+ke},
\eee
by Corollary\,\ref{vcor}. This completes the proof.  
\end{proof}

\textit{Next consider} $F_Z(Z,W)$, which has no Newton dot of the form $(0,q)$. We consider
\be\label{phi}
G(Z,W)\!:=F_Z(Z,W)+W^{h-1},\quad  \text{$h$ as in (\ref{3k})}.
\ee

This time we denote the edges of $\PP(G)$ by $E_i'$, angles by $\theta_i'$, $0\leq i \leq p+1$, and
\be\label{eip}
e_i'\!:=\tan\theta_i', \quad 0\leq i \leq p+1.
\ee
In particular,
\bee
\tan\theta_p'=d_{gr}(\ga_*),\quad \tan\theta_{p+1}'=\infty.
\eee

\begin{lem}
The numbers $\{e_i\}$ in (\ref{ei}) and $\{e_i'\}$ in (\ref{eip}) are related as follows: 
\bee p\geq v-1;  \quad e_{v-1}'\geq e_{v-1};\quad e_i'=e_i , \;0\leq i \leq v-2\;\text{(if $v\geq 2$)};\eee
where $e_{v-1}'>e_{v-1}$ if and only if $(\widehat{m}_{top},\widehat{q}_{top})=(m_{top}, q_{top})$ (see Fig.\,\ref{fig:npZ}).
\end{lem}

This relationship can be seen clearly in Fig.\,\ref{fig:npZ}. Note that $(k,q)$, $k>0$, is a Newton dot of $F$ if, and only if, it is one of $ZF_Z$.

\begin{cor}\label{cor62} Given $i$, $0\leq i\leq p-1$, $F_Z$
``dominates'' $F_W$ in $H_{orn}(e_i',e_{i+1}')$. That is,
\be\label{domin}
|F_Z|^2+|F_W|^2\approx |F_Z|^2, \quad (Z,W)\in H_{orn}(e_i',e_{i+1}').
\ee

However, $F_Z$ ``is dominated by'' $F_W$ in $H_{orn}(e_p',e_{p+1}')$. That is,
\be\label{sdomi}
|F_Z|^2+|F_W|^2\approx |F_W|^2, \quad (Z,W)\in H_{orn}(e_p',e_{p+1}').
\ee
\end{cor}

\begin{proof}
All dots of $F_W$ lie on or above $\PP(G)$,  $G$ defined in (\ref{phi}). On the other hand,
\bee
|F_Z(Z,W)|\approx |G(Z,W)|\quad (Z,W)\in H_{orn}(e_i',e_{i+1}'),
\eee
where $0\leq i \leq p-1$. Hence (\ref{domin}) follows from Corollary
\,\ref{vcor}.

Because $(0,h-1)$ is a vertex of $\PP(G)$, (\ref{sdomi}) is a consequence of  the Vertex Lemma and Corollary\,\ref{vcor}.
\end{proof}

We now establish four lemmas, Lemma\,\ref{Lemma1} to Lemma\,\ref{Lemma5}, then derive Theorem D.

\begin{lem}\label{Lemma1}Let $\ga_*$ be a given polar, $1<d_{gr}(\ga_*)<\infty$. Take $e'$, $1<e'<d_{gr}(\ga_*)$.

Take $i$ such that $e_{i-1}'\leq e'<e_i'$ (hence $i\leq p$). Let $\epsilon>0$ be sufficiently small. Then
\be\label{kkk}
\mer_f(\mathcal{D}^{(e')}(\ga_*,\epsilon))=\mer_f(\mathcal{L}^{(e_i')}(\ga_*)),
\ee 
where $\mathcal{D}$, $\mathcal{L}$ are defined in (\ref{ID}), (\ref{IL}). In particular,
\be\label{IPA}
e_{p-1}'\leq e' <d_{gr}(\ga_*)\;\,(\text{and}\;e'>1)\implies \mer_f(\mathcal{D}^{(e')}(\ga_*,\epsilon))=\mer_f(\GC(\ga_*)).
\ee
\end{lem}

\begin{lem}\label{Lemma2}  Let $\mathcal{D}^{(e)}(\al_*,\epsilon)$ be a given infinitesimal disc, $1<e<\infty$, $\epsilon>0$.

Suppose this disc is disjoint from every minimal gradient canyon (see (\ref{perm})). Then
\be\label{zeroo}
\mer_f(\mathcal{D}^{(e)}(\al_*,\epsilon))=0.
\ee
(The disc may contain singleton canyons.)

On the other hand, if there is a polar $\ga_*$ such that
\bee
\mathcal{D}^{(e)}(\al_*,\epsilon)\subset \GC(\ga_*), \quad e>d_{gr}(\ga_*)> 1,
\eee
then \be\label{zerooo}
\mer_f(\mathcal{L}^{(e)}(\al_*))=0.\ee
% (\ref{zeroo}) remains true.
\end{lem}

\textit{We now prove Lemma \ref{Lemma1}}.

\m

We show the difference of the two sides of (\ref{kkk}) is zero. With the substitution $Z=uW^{e'}$, this amounts to showing that
\be\label{KDS}
\lim_{R\ra \infty}\lim _{W\ra 0}\int_{R|W|^{\delta_i}\leq |u|\leq \epsilon}KdS=0, \; \,\delta_i\!:=e_i'-e', \,\text{$\epsilon>0$ a small constant}.
\ee

Now, like (\ref{ZWW}), we have
 \bee
 dZ=QW^{e'}\,du, \quad Q\!:=\frac{WF_W}{WF_W+e'uF_Z},
 \eee
 where $|Q|\leq C_\emptyset$ by (\ref{6.5}). Hence
 \be\label{KKSS}
 KdS=\frac{2|\Delta_F+\ga''F_Z^3|^2}{(|F_Z|^2+|F_W|^2)^2}\cdot \frac{|Q|^2}{|F_W|^2}\cdot |W|^{2e'}\cdot \frac{du\wedge d\bar{u}}{-2\sqrt{-1}}.
 \ee

We divide the proof into two cases:
\be\label{III}
(\text{I})\; \tan\theta_1'=\tan\theta_1, \quad (\text{II})\; \tan\theta_1'>\tan\theta_1.
\ee

Let us first consider (I). This is the only difficult case.

\m

In the first place, since $d_{gr}(\ga_*)>1$, we know $p\geq 2$. (See Fig.\,\ref{fig:npZ}.)

\m

Consider the weight system $\oa(e')$ (where $e_{i-1}'\leq e'<e_i'$). Let us write
\be\label{writefz}
\I_{\oa(e')}(F_Z)(Z,W)\!:=c_kZ^kW^q+c_{k+1}Z^{k+1}W^{q-e'}+\cdots,\quad c_k\ne 0,
\ee
where $k\geq 1,$ since $\ga$ is a polar. If $e'\geq e_1'$, then $q\geq 1$. 

Note that $(k,q)$ is the common vertex of $E_{i-1}'$ and $E_i'$, and
\bee 
e_{i-1}'<e'<e_i'\implies 
\I_{\oa(e')}(F_Z)(Z,W)=c_kZ^kW^q, \; \text{a monomial}.
\eee

 To compute (\ref{KDS}), we use the expansion (\ref{Del}). We define $K_i$, $1\leq i \leq 4$, and show
 \bee
 \lim_{R\ra \infty}\lim_{W\ra 0}\int_{R|W|^{\delta_i}\leq |u|\leq \epsilon} K_i\cdot\frac{du\wedge d\bar{u}}{-2\sqrt{-1}}=0, \quad 1\leq i \leq 4.
 \eee
 It will then be obvious that (\ref{KDS}) is true (an easy use of the triangle inequality). 

To begin, we define
\bee
K_1\!:=\frac{|F_{ZZ}F_W^2|^2}{(|F_Z|^2+|F_W|^2)^2}\cdot \frac{|Q|^2}{|F_W|^2}\cdot |W|^{2e'}.
\eee

Using the fact that $F_Z$ dominates $F_W$ (Corollary\,\ref{cor62}), we have
\bee
K_1\leq C_\emptyset \frac{|ZF_{ZZ}|^2}{|F_Z|^2} \frac{|F_W|^2}{|F_Z|^2}\cdot \frac{|W|^{2e'}}{|Z|^2}, \quad R|W|^{\delta_i}\leq |u|\leq \epsilon.
\eee
Then, since $ZF_{ZZ}$ and $F_Z$ have the same Newton dots,
\be\label{K}
K_1\leq C_\emptyset'\frac{|F_W|^2}{|F_Z|^2}\cdot \frac{|W|^{2e'}}{|Z|^2}, \quad R|W|^{\delta_i}\leq |u|\leq \epsilon.
\ee

Now let us first consider the case $e_{p-1}'\leq e'<d_{gr}(\ga_*)$.

Applying the Vertex Lemma to the vertex $(k,q)$,  we have
\bee
|F_Z|\approx |u|^k|W|^{q+ke'}, \quad R|W|^{\delta_p}\leq |u|\leq \epsilon.
\eee

Since $(0,h-1)$ is the only dot of $F_W$ on $L^*$, we also have
\bee
 |F_W|\approx |W|^{h-1},\quad h-1=q+kd_{gr}(\ga_*). \quad (\text{See Fig,\,\ref{fig:npZ}}.)
 \eee
 
It follows that
\bee
K_1\leq C_\emptyset'\frac{|F_W|^2}{|F_Z|^2}\cdot \frac{|W|^{2e'}}{|u|^2|W|^{2e'}}\leq C_\emptyset ''\frac{|W|^{2k\delta_p}}{|u|^{2(k+1)}}, \quad R|W|^{\delta_p}\leq |u|\leq \epsilon.
\eee

We use polar coordinates in the following computation:
\be\label{POLAR}\int_{R|W|^{\delta_p}\leq |u|\leq \epsilon}K_1\frac{du\wedge d\bar{u}}{-2\sqrt{-1}}
\leq C_\emptyset\int_{R|W|^{\delta_p}}^\epsilon\frac{ |W|^{2k\delta_p }\cdot r dr}{r^{2(k+1)}}= C_\emptyset[\frac{1}{R^{2 k}}-\frac{|W|^{2k\delta_p}}{\epsilon^{2k}}],
\ee
where $u=r\exp{\sqrt{-1}\theta}$. Thus (with $\epsilon>0$ fixed)
\bee
\lim_{R\ra \infty}\lim_{W\ra 0}\int_{R|W|^{\delta_p}\leq |u|\leq \epsilon}K_1\cdot \frac{du\wedge d\bar{u}}{-2\sqrt{-1}}
%\frac{|Q|^2}{|F_W|^2}\cdot |W|^{2e'}\frac{du\wedge d\bar{u}}{-2\sqrt{-1}}
=0.
\eee

\m

Next suppose $e_{i-1}'\leq e'<e_i'$, where $2\leq i \leq p-1$. 

As pointed earlier, $k\geq 1$,  $q\geq 1$. Hence
\begin{eqnarray}&
\frac{1}{q}W\frac{\partial}{\partial W}(Z^kW^q)=\frac{1}{k}Z\frac{\partial}{\partial Z}(Z^kW^q)=Z^kW^q.&
\end{eqnarray}
It follows that
\bee |WF_W|\approx |ZF_Z|, \quad  R|W|^{\delta_i}\leq |u|\leq \epsilon.
\eee

Then,  by (\ref{K}),
\bee
K_1\leq C_\emptyset |W|^{2(e'-1)},\quad  \lim_{W\ra 0}\int_{R|W|^{\delta_i}\leq |u|\leq \epsilon}K_1 \frac{du\wedge d\bar{u}}{-2\sqrt{-1}}
=0.
\eee
The equality is \textit{independent} of $R$. Letting $R\ra \infty$, the limit is of course still zero.

\m

Now we define $K_2$, $K_3$,  where $e_{i-1}'\leq e'\leq e_i'$,  $2\leq i\leq p$, 
\bee
K_2\!:=\frac{|F_{ZW}F_ZF_W|^2
|Q|^2
}{(|F_Z|^2+|F_W|^2)^2}
\cdot \frac{|W|^{2e'}}{|F_W|^2}, 
\quad K_3\!:=\frac{|F_{WW}F_Z^2|^2
|Q|^2
}{(|F_Z|^2+|F_W|^2)^2}
\cdot \frac{|W|^{2e'}}{|F_W|^2}.
\eee

Since $F_Z$ dominates $F_W$ in the horn domain $ R|W|^{\delta_i}\leq |u|\leq \epsilon$, we have
\bee
K_2 \leq C_\emptyset \frac{|WF_{ZW}|^2}{|F_Z|^2} |W|^{2(e'-1)},\quad  K_3\leq C_\emptyset \frac{|WF_{WW}|^2}{|F_W|^2} |W|^{2(e'-1)}.
\eee
It follows that 
\bee
K_2 \leq C_\emptyset'|W|^{2(e'-1)},\quad
 K_3\leq  C_\emptyset'|W|^{2(e'-1)}, 
\eee
where $e'-1>0$. Hence
\bee
\lim_{W\ra 0}\int_{R|W|^{\delta_i}\leq |u|\leq \epsilon}K_2
\frac{du\wedge d\bar{u}}{-2\sqrt{-1}}
=0,\quad \lim_{W\ra0}\int_{R|W|^{\delta_i}\leq |u|\leq \epsilon}K_3
\frac{du\wedge d\bar{u}}{-2\sqrt{-1}}
= 0.
\eee

Again, the values are independent of $R$. Letting $R\ra \infty$, the limits are still zero.

\m

Finally, we consider $K_4$, $e_{i-1}'\leq e'\leq e_i'$,  $2\leq i\leq p$, where
\bee K_4\!:=\frac{|\ga''F_Z^3|^2|Q|^2
}{(|F_Z|^2+|F_W|^2)^2}\cdot \frac{|W|^{2e'}}{|F_W|^2}
\leq C_\emptyset \frac{|ZF_Z|^2}{|WF_W|^2}\cdot \frac{|\ga''|^2|W|^{2(e'+1)}}{|Z|^2}.
\eee
We have
\bee
K_4 \leq C_\emptyset' \frac{ |W|^{2[1+O(\ga'')]}}{|u|^2}, \quad \lim_{W\ra 0}\int_{R|W|^{\delta_i}\leq |u|\leq \epsilon} K_4\frac{du\wedge d\bar{u}}{-2\sqrt{-1}}
=0, 
\eee
the last equation follows from an elementary result in Calculus:
\begin{eqnarray*}&
\lim_{W\ra 0}\,|W|^\nu\cdot \ln |W|=0, \quad \nu\!:=1+O(\ga'')>0.&
\end{eqnarray*}

By (\ref{KKSS}), the above results obviously imply (\ref{KDS}).

\m

To complete the proof of (I) it remains to consider the case $1<e'<e_1'=e_1$. 
 
In this case, there is no need to change variable (\ref{3b}), (\ref{writefz}) reduces to
\bee
\I_{\oa(e')}(f)(z,w)=c_{m-1}z^{m-1},
\eee
and then $f_z$ dominates $f_w$, $zf_z$ dominates $wf_w$, etc.. It follows that
\bee
K_j\leq C_\emptyset |w|^{2(e'-1)},\; 1\leq j \leq 3, \quad R|W|^{\delta_1}\leq |u|\leq\epsilon,
\eee
while $K_4$ need not be considered. Since $e'>1$, we have
\bee
\lim_{w\ra 0}\int_{R|w|^{\delta_1}\leq |u|<\epsilon} K_j\frac{du\wedge d\bar{u}}{-2\sqrt{-1}}
=0, \quad 1\leq j \leq 3.
\eee

Again this implies (\ref{KDS}).

\m 

To complete the proof of Lemma\,\ref{Lemma1} it remains to consider the case (II) in (\ref{III}).

By Theorem\,2.1 in \cite{kuo-par}, (II) can  happen only if $f(z,w)$ has weighted Taylor expansion
\bee
f(z,w)=(C_\emptyset z^m+C_\emptyset' w^n)+\cdots, \quad \oa(z)\!:=n/m=e_1 \geq 1, \; \oa(w)=1,
\eee
where $``\cdots"$ means ``higher weighted order terms".

In this case $f_z$ dominates $f_w$. The proof is the same as above.

\m

\textit{Next we prove} Lemma\,\ref{Lemma2}. 

\m

Although we do not assume $\epsilon>0$ is sufficiently small, we can add this assumption since our horn domains are compact.
%a compact subset of $\C$ can be covered by a finite number of small discs.

\m

To show (\ref{zeroo}), let us first consider the case where the disc contains a singleton canyon $\GC(\ga_*)$.
Since $\epsilon>0$ is sufficiently small, we have  $J^{(e)}(\ga_*)=J^{(e)}(\al_*)$.

Since $d_{gr}(\ga_*)=\infty$, $\ga$ is a multiple root of $f(z,w)$, say of multiplicity $k$, $k\geq2$. Then $Z$ is a multiple factor of $F(Z,W)$ of the same multiplicity.

Consider $\PP(F)$, with edges $E_0$,..., $E_v$. The vertical edge $E_v$ sits on $(k,q)$, for some $q$. 

We must have $e\geq \tan\theta_{v-1}$, for otherwise, as a consequence of Theorem 2.1 in \cite{kuo-par} (or Lemma 3.3 in \cite{kuo-lu}), there would exist a polar $\hat{\ga}$ such that
\bee
O_y(\ga(y)-\hat{\ga}(y))=\tan\theta_{v-1}>e,\quad f(\hat{\ga}(y), y)\not\equiv 0.
\eee
There would then be a contradiction:
\bee
 d_{gr}(\hat{\ga}_*)<\infty, \quad
 \GC(\hat{\ga}_*)\subset \mathcal{D}^{(e)}(\al_*;\epsilon).
\eee

We can therefore restrict our attention to $H_{orn}(\tan\theta_{v-1}, \infty)$. In this horn domain $F_Z$ dominates $F_W$. The argument for proving Lemma\,\ref{Lemma1} can be applied to  prove (\ref{zeroo}).

\m

We can now assume the disc is also disjoint from singleton canyons.

Consider $G\!:=f(Z+\al(W), W)$ and $\PP(G)$. By the above assumption, $G$ must have a dot 
on $Z=0$ or on $Z=1$, for otherwise $\al$ would be a multiple root of $f$, a contradiction.

In either cases, the same argument for proving Lemma\,\ref{Lemma1} can be applied to prove (\ref{zeroo}).

For (\ref{zerooo}), we set $Z=uW^{d+\delta}$, $d\!:=d_{gr}(\ga_*)$, $\delta\!:=e-d,$ and get  $KdS\approx |W|^{2\delta}du\wedge d\bar{u}$, whence (\ref{zerooo}). This completes the proof of Lemma\,\ref{Lemma2}.

\m

Next we introduce two more lemmas. 

With the substitution $z=uw$, we have, as before,
\be\label{kds}
KdS=\frac{2|\Delta_f|^2}{(|f_z|^2+|f_w|^2)^2}\cdot \frac{|w|^4}{|wf_w+uwf_z|^2}\cdot \frac{du\wedge d\bar{u}}{-2\sqrt{-1}}.
\ee

\begin{lem}\label{Lemma4}Take a multiple root $z_i$ of $H_m(z,w)$ in (\ref{mini}), i.e. $m_i\geq 2$. Then
\begin{eqnarray*}
&\lim_{\epsilon \ra 0} \mer_f(\mathcal{D}^{(1)}(z_{i*};\epsilon))=\mer_f(\mathcal{L}^{(1+\delta)}(z_{i*})),&
\end{eqnarray*}
where $\delta >0$ is sufficiently small.

If $z_i$ is a simple root, i.e. $m_i=1$, then
\begin{eqnarray*}
&\lim_{\epsilon\ra 0} \mer_f(\mathcal{D}^{(1)}(z_{i*};\epsilon))=0.&
\end{eqnarray*}
\end{lem}

\begin{proof} For simplicity, we write $H(z,w)\!:=H_m(z,w)$. We can assume $z_i=0$.

If $0$ is a simple root of $H$, then it is not a common root of $H_z$, $H_w$. Hence
\bee
|f_z|^2+|f_w|^2\approx |H_z|^2+|H_w|^2\approx C_\emptyset |w|^{2(m-1)}\quad \text{near $u=0$}.
\eee

We also have
\bee
|\Delta_f|^2\leq C_\emptyset |w|^{6m-8}\cdot |u|^2, \quad |wf_w+uwf_z|^2\approx |w|^{2m}\cdot |u|^{2},
\eee
where the last relation follows from Euler's Theorem. Hence we have
\begin{eqnarray*}
&|KdS|\leq C_\emptyset', \quad \quad \lim_{\epsilon\ra 0}\int_{|u|\leq \epsilon}KdS=0.\end{eqnarray*}

If $0$ is a multiple root of $H$, say of multiplicity $k+1$,  it is a common root of $H_z$, $H_w$, of multiplicity $k$, and vice versa.

In this case, $H_z$ dominates $H_w$ in the horn domain $H_{orn}(1, 1+\delta)$. We can repeat part of the proof of Lemma\,\ref{Lemma1} to complete the proof.
\end{proof}

\begin{lem}\label{Lemma5} Consider the ``punched" plane
\begin{eqnarray*}& \mathcal{P}(\epsilon)\!:=\C_{enriched}-\bigcup _{i=1}^r \mathcal{D}^{(1)}(z_{i*};\epsilon),\quad \epsilon>0.&
\end{eqnarray*}
We have
\begin{eqnarray}\label{punch}
&\lim_{\epsilon\ra 0}\mer_f(\mathcal{P}(\epsilon))=2\pi m(r-1).&
\end{eqnarray}
\end{lem}
\begin{proof} Let us first assume $r=1$. Then we can assume 
\bee
H(z,w)\!:=H_m(z,w)=z^m.
\eee

By Euler's Theorem,
\be\label{euler}
wf_w+uwf_z=mH(uw,w)+\cdots=mu^mw^m+\cdots.
\ee

Of course, we also have
\bee
O(f_z)=m-1<O(f_w), \quad O(f_{zz})=m-2<O(f_{zw}), \;\, etc.
\eee

It follows that
\begin{eqnarray*}
&O(\Delta_f)>3m-4, \quad KdS=o(w)\cdot  \frac{du\wedge d\bar{u}}{-2\sqrt{-1}},&
\end{eqnarray*}
and hence, for any $a>\epsilon$,
\begin{eqnarray*}
&\lim_{w\ra 0}\int_{\epsilon\leq |u|\leq a}KdS=0.&
\end{eqnarray*}

Therefore (\ref{punch}) is true in the case $r=1$. 

\m

\textit{Next we assume} $r\geq 2$.
In this case, the initial form of $\Delta_f$ is
\bee
\Delta_H=\begin{vmatrix}{H_{zz}}&{H_{zw}}&{H_z}\\{H_{wz}}&{H_{ww}}&{H_w}\\{H_z}&{H_w}&{0}\end{vmatrix}.
\eee

 This can be proved as follows. Since $r\geq 2$, we can assume  
 \bee
 H(z,w)=c_kz^kw^{m-k}+\cdots +c_mz^m,\quad c_k\ne 0\ne c_m,\;\,0<k<m.
 \eee
 Then, by a simple computation, we have 
 \bee
\Delta_H=c_k^3mk(m-k)z^{3k-2}w^{3q-2}+\cdots\not\equiv 0,
 \eee
whence $\Delta_H$ is the initial form of $\Delta_f$. 

(It is not difficult to see that  $\Delta_H \equiv 0$ if and only if $H$ has only one factor, i.e. $r=1$.)

 \m

We also have
\bee
wf_w+uwf_z=mH(uw,w)+\cdots=mH(u,1)w^m+\cdots.
\eee

Hence we can rewrite (\ref{kds}) in the form
\bee
KdS=\{\frac{2|\mathcal{R}'(u)|^2}{(1+|\mathcal{R}(u)|^2)^2}+o(W)\}\frac{du\wedge d\bar{u}}{-2i}, \quad \mathcal{R}(u)\!:=\frac{H_z(u,1)}{H_w(u,1)}.
\eee

If $(z-z_iw)^{m_i}$, $m_i\geq 2$, is a factor of $H(z,w)$, then $(z-z_iw)^{m_i-1}$ is a common factor of $H_z$, $H_w$, and vice versa. Hence, having canceled all common factors, we have
\bee
\mathcal{R}(u)=p(u)/q(u), \quad \deg p(u)=r-1\geq \deg q(u), 
\eee
where $p(u)$, $q(u)$  are relatively prime.

\m

Now, the rational function
\bee \mathcal{R}:\, \C \longrightarrow \C, \quad u\mt  \mathcal{R}(u)\!:=U+iV,
\eee 
is an $(r-1)$-fold branch covering, where $U$, $V$ satisfy the Cauchy-Riemann equations.

Take $\delta >0$. Consider the punched disk:
$$P(\delta)\!:=\{(z,w)\in D_{isc}(0;\eta)\,|\,|z-z_iw|\geq \delta|w|,\;1\leq i \leq r\}.$$

\m

An important observation is that the surface $\SU_c\cap P(\delta)$ consists of $m$ sheets, since for each generic $u$ the surface has $m$ distinct intersecting points with the line $z=uw$. 

\m

When the integral of $K$ on $\SU_c\cap  P(\delta)$ is transformed to one over the complex $(U+iV)$-plane, the latter ought to be multiplied by a factor of $m(r-1)$. Thus
$$\lim_{\delta\ra 0}\lim_{W\ra 0}\int_{\SU_c\cap  P(\delta)}KdS=m(r-1)\cdot \lim_{\delta\ra 0}\int_{\C(\delta)} \frac{2dU\wedge dV}{[1+U^2+V^2]^2}= 2\pi m(r-1),$$where $\C(\delta)\!:=\{z\,|\,|z-z_i|\geq \delta, \; 1\leq i\leq r\}$.  
This completes the proof of Lemma\,\ref{Lemma5}.
\end{proof}

\textit{We can now complete the proof of Theorem D.}

Let $\mathcal{L}^{(e)}$, $e>1$, be a given infinitesimal line.

If $\mathcal{L}^{(e)}$ does not contain any polar $\ga_*$, then $\mer_f(\mathcal{L}^{(e)})=0$ by Lemma (\ref{Lemma2}).

Otherwise, we can permute the indices, if necessary, so that
$\{\GC(\ga_{1*}), ..., \GC(\ga_{l*})\}$
are the (distinct) gradient canyons contained in $\mathcal{L}^{(e)}$. 
%(This list is not related to  (\ref{perm}).)

It can happen that some $\ga_{j*}$ has $d_{gr}(\ga_{j*})=e$. Then, by Theorem B, $\GC(\ga_{j*})$ is necessarily the \textit{only}  canyon contained in $\mathcal{L}^{(e)}$; so that $l=1$ , $\mathcal{L}^{(e)}=\GC(\ga_{1*})$ is minimal, (\ref{DPP}) holds.

Now suppose
\bee
e<d_{gr}(\ga_{j*})\leq \infty, \quad 1\leq j\leq l.
\eee
In this case,
\begin{eqnarray}\label{rec}&
\mer_f(\mathcal{L}^{(e)})=\sum_{j=1}^l\mer_f(\GC(\ga_{j*})),&
\end{eqnarray}
where singleton canyons can be discarded. That is, (\ref{DPP}) is true. 

A proof of (\ref{rec}) is as follows. Take $\al_*\in \mathcal{L}^{(e)}$, which is \textit{not} in one of the above canyons. Let $\al$ be a coordinate of $\al_*$. Take $k$ such that
\bee
O(\al-\ga_k)=\max\{O(\al-\ga_j)\,|\,1 \leq j \leq l\}.
\eee

Now consider
\bee
Q\!:=(q_1, q_2,q_3,q_4)\in \Q^4, \quad q_1\!:=O(\al-\ga_k), \; \,q_2>0,
\eee
and  let
$$\ga^{(Q)}(y)\!:=\ga_k(y)+(q_3+\sqrt{-1}\,q_4)y^{q_1}.$$

If the rational numbers $q_2$, $q_3$ $q_4$ are properly chosen, then
\be\label{unc}
\al_*\in \mathcal{D}^{(q_1)}(\ga^{(Q)}_*;q_2), \quad \mathcal{D}^{(q_1)}(\ga^{(Q)}_*;q_2)\cap[\cup_{1\leq j \leq l}\GC(\ga_{j*})]=\emptyset.
\ee

Of course there are uncountably many $\al_*$, but the number of discs appearing in (\ref{unc}) is countable. We can list them as $\mathcal{D}_n$, $n\in\Z^+$.

Each $\mathcal{D}_n$ is  disjoint from the minimal gradient canyons,  so by Lemma\,\ref{Lemma2},
\bee
\mer_f(\mathcal{D}_n)=0, \quad 1\leq n <\infty,
\eee
and then (\ref{rec}) follows from the identity
\bee\mathcal{L}^{(e)}=[\cup_{1\leq j \leq l}\GC(\ga_{j*})]\cup [\cup_{1\leq n <\infty}\mathcal{D}_n].
\eee

Finally, to prove (\ref{entire}), let $\ga(y)=cy+\cdots$ be given. By Lemma\,\ref{LH}, $1<d_{gr}(\ga_*)\leq \infty$ if and only if $c$ is a multiple root of $H_m(x,1)=0$. Hence (\ref{entire}) follows from Lemma\,\ref{Lemma4} and Lemma\,\ref{Lemma5} and the previous calculation (\ref{rec})
\begin{rem}\label{LRK}
The above  lemmas can be used to show that $\mer_f$ (defined on the enriched discs of positive radius) satisfies the hypothesis of Carath\'{e}odory's Extension Theorem, hence extends to a measure on the $\sigma$-algebra generated by the discs.
\end{rem}

\section{Proof of Theorem E} \label{them}

Let us write (\ref{MWK}) as
\begin{eqnarray*}&\mu_f=\sum_{j=1}^{m-1}\mu_f(\ga_j), \quad \mu_f(\ga_j)\!:=O_y(f(\ga_j(y),y))-1.\end{eqnarray*}
We shall compute each $\mu_f(\ga_j)$.

For convenience, let us re-name and list the roots of $f_z^{(\epsilon)}(z,w)$ and $f_z^{(\delta)}(z,w)$ as
$$\{\ga_1^{(\epsilon)},..., \ga_{m-1}^{(\epsilon)}\} \quad \text{and}\quad \{\ga_1^{(\delta)},..., \ga_{m-1}^{(\delta)}\}$$respectively, in such a way that
\be \label{EEEE}d_{gr}(\ga_j)=O(\ga_j-\ga_j^{(\epsilon)})=O(\ga_j-\ga_j^{(\delta)})=O(\ga_j^{(\epsilon)}-\ga_j^{(\delta)}), \quad 1\leq j \leq m-1.\ee

It is easy to see that
\be\label{ji1}
O(\ga_j-\ga_i)\geq d_{gr}(\ga_j)\implies O(\ga_j^{(\epsilon)}-\ga_i)=d_{gr}(\ga_j),
\ee

and
\be\label{ji2}
O(\ga_j-\ga_i)<d_{gr}(\ga_j)\implies O(\ga_j^{(\epsilon)}-\ga_i)=O(\ga_j-\ga_i).
\ee

\m

Now consider $\PP(F)$, $F(Z,W)\!:=f(Z+\ga_j(W), W)$, as in Fig.\,\ref{fig:npF}.
Recall that $(i,q)$, $i\geq 1$, is a dot of $F$ if and only if $(i-1,q)$ is one of $F_Z$.

The edges of $\PP(F_Z)$ are denoted by $E_i'$ with co-slopes $e_i'$ as in (\ref{eip}), $0\leq i \leq p+1$, where $E_{p+1}' $ is vertical, $(m_i',q_i')$ is the right vertex of $E_i'$.

\m

Amongst the dots of $F_Z$ on $L^*$, let $(m_*,q_*)$ be the lowest one, i.e., $q_*$ is minimal. (As shown in Fig.\ref{fig:npZ}, $F$ has no dots on $L$ lying below $(m_*+1,q_*)$.)
By Theorem\,2.1 in \cite{kuo-par},
\bee \sharp\{i\,|\,O(\ga_j-\ga_i)\geq d_{gr}(\ga_j)\}=m_*.
\eee

To compute $\mu_f(\ga_j)$, let us first assume $d_{gr}(\ga_j)>1$.

Of course $(m_*,q_*)$ is a vertex of $\PP(F_Z)$, say
\bee
(m_*,q_*)=(m_{k+1}', q_{k+1}'), \quad k\leq p.
\eee

Take any $l$, $1\leq l \leq k$. By the same theorem in \cite{kuo-par},
\bee
\#\{i\,|\,O(\ga_j-\ga_i)=e_l'\}=m_l'-m_{l+1}'.
\eee

It follows that (see Fig.\,\ref{fig:npF})
\begin{eqnarray*}&
h-1=m_{k+1}'d_{gr}(\ga_j)+\sum_{l=1}^{k}(m_l'-m_{l+1}')e_l'.&
\end{eqnarray*}

By (\ref{ji1}), (\ref{ji2})
$$m_{k+1}'=\sharp\{i\,|\,O_y(\ga_j^{(\epsilon)}-\ga_i^{(\delta)})= d_{gr}(\ga_j)\},\quad m'_l-m_{l+1}'=\sharp\{i\,|\,O(\ga_j^{(\epsilon)}-\ga_i^{(\delta)})=e_i'\},$$where $1\leq l\leq k$. Hence
\begin{eqnarray}\label{case}&
\mu_f(\ga_j)=h-1=\sum_{i=1}^{m-1}O_y(\ga_j^{(\epsilon)}-\ga_i^{(\delta)}),\end{eqnarray}
and then
\begin{eqnarray*}&\mu_f=\sum_{j=1}^{m-1}\mu_f(\ga_j)=\sum_{j=1}^{m-1}\sum_{i=1}^{m-1}O_y(\ga_j^{(\epsilon)}-\ga_i^{(\delta)})=\mathscr{L}(\mathscr{N}_\varepsilon, \mathscr{N}_\delta).\end{eqnarray*}

Now, assume $d_{gr}(\ga_j)=1$. In this case, $$\tan\theta_{top}=1, \quad \mu_f(\ga_j)=m-1,\quad O_y(\ga_j^{(\epsilon)}-\ga_l^{(\delta)})=1,$$where $1\leq l\leq m-1$.
Hence (\ref{case}) remains true. This completes the proof.
  
\section{Notes}\label{appendix}

(\textbf{I}) We give a proof of (\ref{1b}). (Compare \cite{ness} and \cite{nishimura}.)
First, suppose $f=c$ is a graph:$$f(z,w)-c=w-g(z), \quad g(z)=u(x,y)+\sqrt{-1}\,v(x,y),$$where $g(z)$ is holomorphic,  $z=x+\sqrt{-1}\,y$. For the First Fundamental Form, we have
$$E=G=1+|g'(z)|^2=1+u_x^2+v_x^2,\quad F=0;$$and the (\textit{negative} of the usual) Gaussian curvature (\cite{docar}, p.237) is
\beqn &K=\frac{1}{2E^3}\cdot[E(E_{xx}+E_{yy})-(E_x^2+E_y^2)].& \eeqn

Using the Cauchy-Riemann equations, we then have
\beqn &K=\frac{2}{E^3}[u_{xx}^2+v_{xx}^2]=\frac{2}{E^3}|g^{\prime \prime}(z)|^2=\frac{2\,|\Delta_f|^2}{||\Grad f||^6}.&\eeqn

Now the general case. Near a regular point $(z_0,w_0)$ of $f(z,w)=c$, we can write
$$f(z,w)-c=\mu(z,w)[(w-w_0)-g(z-z_0)], \quad \mu(z_0,w_0)\ne 0\;\;(\mu\;\,\text{a unit}).$$ We then \textit{evaluate the derivatives at} $(z_0,w_0)$: $f_z=-\mu g'$, $f_w=\mu$ and
$$f_{zz}=-2\mu_z g'-\mu g^{\prime \prime}, \; f_{ww}=2\mu_w,\; f_{zw}=-\mu_wg'+\mu_z,$$whence
$\Delta_f(z_0,w_0)=\mu(z_0,w_0)^3g^{\prime \prime}(z_0)$. This completes the proof.

\s

(\textbf{II}) Let us first show how to find the Newton-Puiseux coordinates of a given $\al_*$.
We can apply a unitary transformation, if necessary, so that $T(\al_*)=[0:1]$. 

Take \textit{any} parametrization $\beta(t)=(z(t), w(t))$ of $\al_*$. Then
\begin{eqnarray}\label{unique}&O_t(z(t))>O_t(w(t)), \quad \lim_{t\ra 0} \frac{\|\beta(t)\|}{|w(t)|}=1.&
\end{eqnarray}

Set
$y=w(t)$. Solve $t$ as a fractional power series in $y$, then substitute it into $z(t)$:
\bee
y=w(t)\; \stackrel{solve}{\longrightarrow}t=\tau(y)\stackrel{substitute}{\longrightarrow}\al(y)\!:=z(\tau(y)),
\eee
where $\al(y)$, and the conjugates, are the Newton-Puiseux coordinates of $\al_*$.

Having found $\al(y)\in \F_1$, let us consider $\al_{para}(t)$ in (\ref{para}), and write
\bee
K(\al_{para}(t))=at^{NL}+\cdots,\quad a\ne 0,\;\,L\in \Q,\;\, N\!:=m_{puiseux}(\al),
\eee
where $a$ must be a  positive real number since $K(z,w)$ is. We then have
\be\label{KKK}
\lim_{t\ra 0}\frac{K(\al_{para}(t))}{\|(\al(t^N), t^N)\|^L}=a,
%\cdot \eta,\quad |\eta|=1,
\ee
since $O_y(\al(y))>1$. 
%The left hand side is real, hence $\eta=1$.
Now let us  replace $t$ by $w(t)^{1/N}$ in (\ref{KKK}), then
$$\lim_{t\ra 0}\frac{K(\beta(t))}{\|\beta(t)\|^L}=\lim_{t\ra 0}\frac{K(\al_{para}(t))}{\|(\al(t^N), t^N))\|^L}=a.
$$
Thus $(a,L)$ is \textit{independent} of the parametrization; $K_*$ is \textit{well-defined}.

\s

(\textbf{III}) The gradient canyons are not topological invariants. For instance, consider  $$F(z,w)=z^3+w^{12}+ tz^2w^5,$$
which is a topologically trivial family. At $t=0$, we have only one double polar with $d=\frac{11}{2}$, hence only one canyon.
For $t\neq 0$, however, there are two disjoint canyons corresponding to the two distinct polars, both having degree  $d=6$.

Gradient canyons are invariants of a stronger notion of equi-singularity, to be studied in another paper.

However, as it follows from our Theorem B,  the minimality of  $\C_{enriched}$ is a topological invariant. Indeed
 $f(z,w)$ has exactly $r$ distinct roots $\zeta_i$ in (\ref{ff})
 if and only if $f$ is topologically equivalent to a homogeneous polynomial germ. This is also equivalent to $f$ having no concentration of curvature at $0\in \C^2$ in the sense of  \cite{kkp1}  (for instance see  Theorem 5.4 in \cite{kkp1}).

\bibliographystyle{amsplain}

\end{document}